\newtheorem{theorem}{Theorem}[section]
\newtheorem{lemma}[theorem]{Lemma}
\newtheorem{proposition}[theorem]{Proposition}
\newtheorem{corollary}[theorem]{Corollary}
\theoremstyle{definition}
\newtheorem{definition}[theorem]{Definition}
\newtheorem{remark}[theorem]{Remark}
\newtheorem{claim}[theorem]{Claim}
\def\N{\Bbb N}
\def\R{\Bbb R}
\title{On distinct angles in the plane}
\author{Sergei V. Konyagin, Jonathan Passant, Misha Rudnev}
\thanks{Jonathan Passant is supported by the Heilbronn Institute.}
\address{Sergei V. Konyagin, Steklov Mathematical Institute, Gubkina Str. 8, Moscow  119991, Russia}
\email{konyagin23@mi-ras.ru} 
\address{Jonathan Passant, Department of Mathematics, University of Bristol,
 Bristol BS8 1TW, United Kingdom}
\email{jonathanpassant@gmail.com}
\address{Misha Rudnev, Department of Mathematics, University of Bristol,
 Bristol BS8 1TW, United Kingdom}
\email{m.rudnev@bristol.ac.uk}
\subjclass[2020]{52C10}
\keywords{Distinct Angles}
\begin{document}

\maketitle

\begin{abstract} 
We prove that if $N$ points lie in convex position in the plane then they determine $\Omega(N^{5/4})$ distinct angles, provided that the points do not lie on a common circle. 

This is derived from a more general claim that if $N$ points in the convex position in the real plane determine $KN$ distinct angles, then $K=\Omega(N^{1/4})$ or $\Omega(N/K)$ points are co-circular. 

The proof makes use of the implicit order one can give to points in convex position and relies on a slightly more general order assumption. The assumption enables one to reduce the issue to counting incidences between points and a multiset of cubic curves, with special attention being paid to the case when the curves are reducible.
\end{abstract} 

\section{Introduction}
How many distinct angles, formed by triples of points does a set of $N$ non-collinear points in $\mathbb R^2$ define? 
Two obvious examples, giving, up to a constant multiplier, some $N$ distinct angles are as follows.  One can take a right $N$-gon together with its centre. Or one can take two points, symmetric relative to a line, supporting the rest of the points, so that the angles with the base on the line and vertex at one of the former two points are in an arithmetic progression. See Figure \ref{f:examples}.

We will use the standard notation $O$ and $\Omega$ for upper and lower bounds up to absolute constants, along with the symbols $\ll,\gg$, respectively. The symbol $\Theta$ means both $O$ and $\Omega$. If an estimate using these symbols contains a parameter $\epsilon$, the implicit constant will  also depend on $\epsilon$.

One can modify these examples by taking generalised progressions to vary the number of distinct angles between $\Theta(N)$ and $\Theta(N^2)$.

However, if all the points but one lie on a circle, and the latter point is not the centre, it may well be the number of distinct angles is $\Omega(N^{2-\epsilon})$ for any $\epsilon >0$ (or even with $\epsilon =0$) the constant in the $\Omega$-symbol (possibly) depending on $\epsilon$. We present the lower bound $\Omega\left( N^{\frac{13}{10}-\epsilon}\right)$ as an easy consequence of the known estimates on convexity versus sumsets, formulated as Theorem \ref{th:conv} (see the proof of Theorem \ref{t:unc}).

\begin{figure}[h!]
    \begin{minipage}{0.49\textwidth}\centering
\includegraphics[scale=0.4]{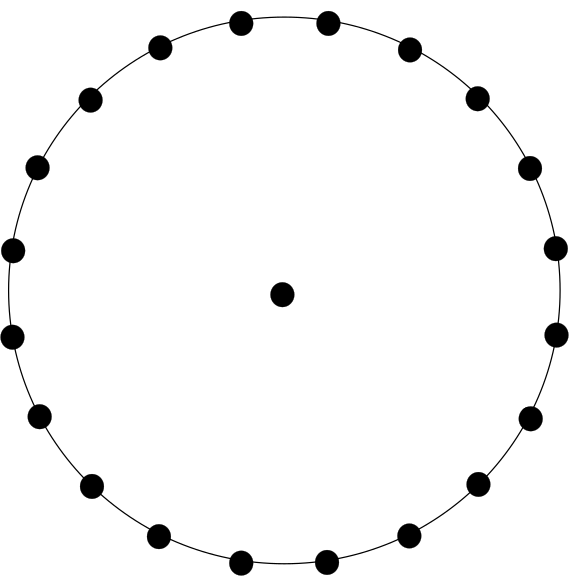} 
    \end{minipage}
    \begin{minipage}{0.49\textwidth}\centering
       \includegraphics[scale=0.3]{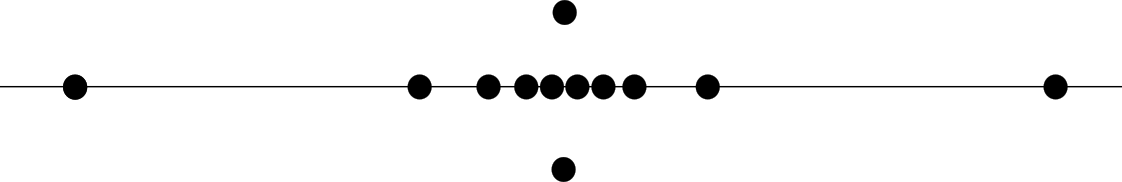} 
    \end{minipage}
    \caption{Left: Angles in arithmetic progression on a circle. Right: Angles in arithmetic progression on a line.}
    \label{f:examples}
\end{figure}

Other than the above two examples, we do not know of  point configurations with fewer than $\Omega(N^2)$ distinct angles. Can one carefully conjecture that the only way of getting fewer than $\Omega(N^{2-\epsilon})$ angles is by way of the above two examples? 

Currently, we do not know how to prove any nontrivial lower bound for the number of distinct angles even in the case of
 the points being {\em in general position} that is having {\em no three on a line and no four on a circle}.
As for the upper bound, one can get $O(N^2)$ distinct angles by taking $N$ points on a quadric. Namely, one can take a right polygon and scale it into an ellipse (leaving, if one wishes, only a quarter of the ellipse to ensure that the points are in general position). One can also consider  points, with abscissae in an arithmetic progression on the parabola $y=x^2$ or points, whose abscissae are in a geometric progression, on the hyperbola $y=1/x$. In each of these configurations, one easily sees that pairs of points determine $O(N)$ line slopes, hence altogether $O(N^2)$ angles between pairs of lines. A little more calculation would be required to show $\Omega(N^2)$ is the lower bound for the number of angles. We have verified this for the parabola and hyperbola.

If an $N$-point set determines $O(N)$ distinct directions, then it will determine $O(N^2)$ distinct angles.
Fleischmann, the first author, Miller, Palsson, Pesikoff and Wolf \cite{fleischmann2023distinct} (see the references therein describing an earlier construction by projecting a higher-dimensional cube) have recently pointed out that in a point set on the logarithmic spiral, one can have $O(N^2)$ angles without having $O(N)$ directions.
Elekes \cite{elekes1999linear} conjectured that a set determining $O(N)$ distinct directions must contain at least {\em six} (and possibly many more) points on a quadric, not necessarily irreducible, so one may say six on a quadric or three on a line.
This conjecture is wide open as are many structural questions concerning extremal point configurations in $\mathbb R^2$.
Is it related to the question we ask about the number of distinct angles? Six points on a quadric in the Elekes conjecture may turn out to be a fairly large proportion of the points, and in this case it would not be difficult to show that the quadric must be a circle or reducible. {\em Reducibility is meant over the reals.}
But it is generally not clear to what extent the open projective structure questions concerning point sets determining few directions (or more generally $\Omega(N^2)$ collinear point triples) relate to metric ones about the sets with few angles.
It seems to be fair to say that  the theme of counting angles in plane point sets has been somewhat neglected, versus, say counting distances.
However, we have not succeeded so far to prove a nontrivial lower bound other than under a fairly strong assumption made in this paper, which would not be satisfied if, say the point set were a grid $B\times B$. In the latter case, which proved to be elusive to us, Roche-Newton recently succeeded in establishing a nontrivial lower bound $\Omega\left(|B|^{2+\frac{1}{14}}\right)$, using a rather different approach \cite{olly_cp}.

Questions of giving bounds concerning the number of various geometric objects, arising in plane point configurations become easier if there is some order on the point set one take advantage of. To this effect, what we prove in this paper can be paralleled to Section 4 of the recent paper by Solymosi \cite{solymosi2023structure}, whose Theorem 15 claims that if an $N$-point set in {\em convex position} -- namely forming a vertex set of a convex polygon -- yields $O(N^{1+\epsilon})$ directions, then $\Omega(N^{1-\delta(\epsilon)})$ points must lie on a quadric. (The quantitative relation between the parameters $\epsilon$ and $\delta$ was not not spelt out explicitly but can be.) We use the convex position assumption in roughly this way: that 
the point set $P$ has two positive proportion subsets $P_1,\, P_2$, that is both of size $\Omega(N)$, such that each of the points from $P_2$ ``sees'' the points in $P_1$ in the same order. This can be seen in Figure \ref{fig:MutuallyAvoiding}. If one considered a line through any point of $P_2$, as the angle of the line with the horizontal increases, the points of $P_1$ are encountered in the same order. 

We will also use a second characterisation, namely the points of $P_1$ lie in the intersection of some fixed set of open half-planes, determined by all lines through pairs of distinct points of $P_2$. For each of these lines one chooses one of the two open hyperplanes it borders. See \cite[Lemma 19]{solymosi2023structure}.

We could have based a weaker qualitative version of our main result here on Solymosi's arguments but this would be unwieldy, costly and not immediate. Just showing that quadrics with many points should be lines or circles would require quantitative estimates on expanders of Elekes-R\'onyai type. On the other hand, our proof is not dissimilar, and the reader familiar with \cite{solymosi2023structure} will see this. What we call the order assumption is nearly synonymous to what some authors call {\em mutually avoiding sets}.

\clearpage
Let us formalise the {\em order assumption} we will use throughout.
\begin{definition}[Order assumption]\label{def:orderAssumption}
    Let $P$ be a finite point set in $\R^2$. Suppose, $P$ contains two disjoint sets $P_1,P_2$ with, respectively, $N_1$ and $N_2$ elements, and such that for every $x\in P_1$, the order of slopes of lines, connecting $x$ with the points $\{p_1,p_2,\ldots\}$ of $P_2$ is the same, up to a cyclic shift. In particular, we assume that no point of $P_1$ lies on a line connecting two or more points of $P_2$.
\end{definition}
\begin{center}
    \begin{figure}[ht]
        \centering
        \includegraphics[scale=0.7]{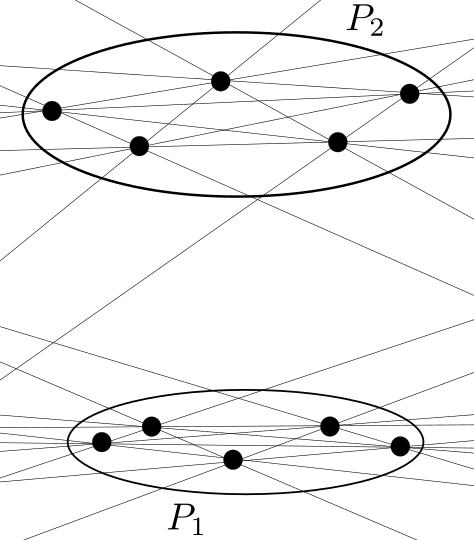}
        \caption{Point set  $P=P_1\cup P_2$ satisfies the \emph{order} assumption. The sets $P_1$ and $P_2$ are mutually avoiding.}
        \label{fig:MutuallyAvoiding}
    \end{figure}
\end{center}
\vspace{-0.7cm}
We can now formulate the main results in this paper.

\begin{theorem} \label{t:main}
Suppose, a $N$-point set $P\subset \R^2$, satisfies the order assumption with $|P_1|,\,|P_2|=\Omega(N)$, and $P$ determines $KN$ distinct angles. Then $K=\Omega(N^{1/4})$ or there are $\,\Omega(N/K)$ co-circular points in $P$.
\end{theorem}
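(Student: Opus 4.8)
The plan is to translate the angle count into additive information about slope sets, and then play the growth of these sets against the inscribed-angle characterisation of co-circularity.

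First I would record the basic dictionary. For a vertex $x\in P_1$ let $\Theta_x\subset[0,\pi)$ be the set of slope angles of the lines $xp$, $p\in P_2$; the order assumption guarantees $|\Theta_x|=N_2=\Omega(N)$ and fixes a common cyclic order on all the $\Theta_x$. Writing $\theta_p(x)$ for the slope angle of $xp$, the angle of $P$ with vertex $x$ and legs in $P_2$ is exactly $|\theta_p(x)-\theta_q(x)|$, so these angles form a subset of the difference set $\Theta_x-\Theta_x$. Since $P$ has only $KN$ distinct angles, every $\Theta_x$ satisfies $|\Theta_x-\Theta_x|\ll KN\approx K|\Theta_x|$, i.e.\ each slope set has difference-doubling $O(K)$. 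The geometric bridge to co-circularity is the inscribed angle theorem in the form $\theta_p(x)-\theta_p(x')\equiv\angle x p x'\pmod\pi$: for fixed $x,x'\in P_1$ the points $p\in P_2$ realising a common value of $p\mapsto \theta_p(x)-\theta_p(x')$ lie on one of two circles through $x,x'$, so a level set of this difference of size $m$ yields $\gg m$ co-circular points.

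The argument is then a dichotomy driven by this bridge. Fix a typical pair $x,x'\in P_1$ and consider the monotone correspondence $\phi_{x,x'}\colon \Theta_x\to\Theta_{x'}$, $\theta_p(x)\mapsto\theta_p(x')$, which is well defined and order preserving by the order assumption. The values $\theta_p(x)-\theta_p(x')$ are its diagonal differences, whose multiplicities measure co-circularity as above. If for a positive proportion of pairs this difference takes $\ll K$ distinct values, then some value has multiplicity $\gg N/K$ and we land in the second alternative, $\Omega(N/K)$ co-circular points. Otherwise the diagonal difference is spread out, and I would feed this into a growth estimate: convex position makes $\phi_{x,x'}$ (or its inverse) a convex map on the ordered slope set, so the squeezing lemma of \cite{bradshaw2023growth,ruzsa2021distinct} converts the spread-out diagonal differences together with the small difference-doubling of $\Theta_x$ and $\Theta_{x'}$ into a convexity-versus-sumset configuration. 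Applying the state-of-the-art bound \cite{stevens2022sum} (Theorem \ref{th:conv}) then forces $|\Theta_x-\Theta_x|\gg N^{1+c}$ for an absolute constant $c$, and comparing with $|\Theta_x-\Theta_x|\ll KN$ gives $K\gg N^{1/4}$ after optimising the exponents against the co-circular extraction.

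I expect two steps to carry the real difficulty. The first is establishing the convexity of $\phi_{x,x'}$ — or of whatever one-variable reduction actually enters the Stevens--Warren bound — cleanly from the convex-position/mutually-avoiding hypothesis, since a priori the correspondence between two pencils of directions is only projective when the target points lie on a line, and one must instead exploit the separation of $P_1$ from $P_2$ by the half-planes through pairs of $P_2$. The second is the bookkeeping that makes the two alternatives exhaust all cases simultaneously: one must choose the pair $(x,x')$, or average over many such pairs, so that the same slope set is both additively structured, because the angle count is small, and convexly distorted, because of the geometry, and then balance the resulting growth against the $\Omega(N/K)$ count of co-circular points to land exactly at the threshold $N^{1/4}$.
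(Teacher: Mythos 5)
Your inscribed-angle bridge (level sets of $p\mapsto\theta_p(x)-\theta_p(x')\equiv\angle xpx'$ lie on circles through $x,x'$) is sound geometry, but the second branch of your dichotomy rests on a step that fails. The correspondence $\phi_{x,x'}\colon\Theta_x\to\Theta_{x'}$ is not the restriction of any function determined by $x,x'$ alone: given a direction $\theta$ at $x$, no geometric rule produces a direction at $x'$ --- the matching is defined pointwise by the arbitrary set $P_2$. Theorem \ref{th:conv} requires a fixed strictly convex function $f$, independent of the set $B$, so it simply cannot be fed a set-dependent monotone correspondence. All that convex position and mutual avoidance buy you is that $\phi_{x,x'}$ is order-preserving, and monotonicity carries no growth: an arbitrary increasing bijection between two arithmetic progressions is monotone while both difference sets stay of size $O(N)$. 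Worse, in the one case where $\phi_{x,x'}$ does extend to a smooth map --- $P_2$ co-circular with $x$ and $x'$ --- it is a shift, hence affine; so convexity is precisely what you may not assume, and ``the diagonal difference is spread out'' is not a convexity hypothesis. The dichotomy also fails to exhaust cases: the global budget of $KN$ distinct angles gives no per-pair bound of $\ll K$ distinct values of $p\mapsto\angle xpx'$ (a single pair may realise up to $KN$ values, and different pairs may share them), so a regime where every pair realises, say, $N^{1/2}$ values is covered by neither branch. Finally, the threshold $N^{1/4}$ is asserted rather than derived: the Stevens--Warren exponent $13/10$, even if it applied, would produce a different exponent.

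For comparison, the paper's proof of Theorem \ref{t:main} uses no convexity-versus-sumset input at all; Theorem \ref{th:conv} enters only later, in Theorem \ref{c:unc}, to trade $KN$ against $(N/K)^{13/10}$. Instead the paper double counts a bipartite graph $G$ on $P_1\cup(P_2\times P_2)$ whose edges record coincidences $\angle pxq=\angle sxt$ between near-neighbour pairs in the common order on $P_2$. The squeezing idea is used at a single vertex $x$: Pl\"unnecke--Ruzsa gives $|D_x'+(D_x-D_x)|\ll K^2N$, so most consecutive gaps of $D_x$ fall into $O(K^2)$ ``normal'' difference classes, and Cauchy--Schwarz yields $\gg N^2/K^2$ edges at each $x$, hence $|G|\gg N^3/K^2$ (Proposition \ref{prop:lowerBound}). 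For the upper bound, equality of cotangents turns each edge into an incidence of $P_1$ with a cubic curve $\gamma_{pqst}$; after the multiplicity and reducibility analysis, irreducible curves and perpendicular-bisector lines (handled via bisector energy) give $|G|\ll N^{5/2}$ --- this, against $N^3/K^2$, is where $K\gg N^{1/4}$ actually comes from --- while the reducible Scenario R1, circles through all four of $p,q,s,t$, is where co-circularity genuinely enters, giving $|G|\ll NM^2$ and hence $M\gg N/K$ (Proposition \ref{prop:IncidenceBound}). Your level-set observation is in the same spirit as Scenario R1, but making it quantitative requires the incidence-theoretic machinery (rich circles, multiplicity control), not a growth estimate for the slope sets.
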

From the above Theorem, we derive the main result as follows.
\begin{theorem}\label{t:unc} Suppose, a  $N$-point set $P\subset \R^2$, satisfies the order assumption with $|P_1|,\,|P_2|=\Omega(N)$, and $P$ is not contained in the union of points on a circle and its centre.  Then $P$ determines $\Omega\left(N^{\frac{5}{4}}\right)$ distinct angles.
\end{theorem}
If $P$ is a set of points in convex position, then one can choose $P_1$ and $P_2$, with both $N_1,N_2 \geq \lfloor N/2\rfloor - 1$. Indeed, any horizontal line can intersect at most two points. Slide a horizontal line up until it partitions the set into exactly equal pieces, call these pieces $P_1$ and $P_2$. We do not include points on our dividing line in either $P_1$ or $P_2$. (If $|P|$ is odd, for example, then there will be a single point on our dividing line which is not included. In the even case there maybe two points not included). We thus have the following Corollary.
\begin{corollary}\label{c:convexNotInCircle}
Suppose $P\subset \R^2$ is a  $N$-point set in convex position. If $P$ is not contained in a circle, then $P$ determines $\Omega\left(N^{\frac{5}{4}}\right)$ distinct angles.
\end{corollary}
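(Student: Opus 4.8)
The plan is to deduce Corollary \ref{c:convexNotInCircle} from Theorem \ref{c:unc}, so the only work is to verify that a set in convex position satisfies the order assumption with linearly large parts and to reconcile the two non-degeneracy conditions. First I would record the sweep reduction already indicated above the corollary: given $P$ in convex position, slide a horizontal line upward until it splits $P$ into two equal halves, discard the at most two points it meets, and call the upper and lower halves $P_1$ and $P_2$. Since any horizontal line meets a convex-position set in at most two points, each half has size at least $\lfloor N/2\rfloor-1=\Omega(N)$. It then remains to check Definition \ref{def:orderAssumption}. Fix $x\in P_1$; every point of $P_2$ lies strictly below $x$, so these points are seen from $x$ within an angular window of width less than $\pi$, and convexity forces the angular (equivalently slope) order in which $x$ sees $P_2$ to be the same for all $x\in P_1$ — this is exactly the mutually-avoiding picture of Figure \ref{fig:MutuallyAvoiding}. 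Moreover no $x\in P_1$ can lie on a line through two points of $P_2$, since such an $x$ would lie in the convex hull of $P_2$ and contradict extremality. Hence $P$ satisfies the order assumption with $|P_1|,|P_2|=\Omega(N)$.

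With the order assumption established, Theorem \ref{c:unc} delivers the bound $\Omega(N^{1+3/23})$ as soon as $P$ is not contained in the union of a circle and its centre, and bridging the gap between this and the corollary's weaker hypothesis (no single circle containing $P$) is the step I expect to require the most care. Suppose $P$ is in convex position, is not contained in any circle, yet lies in $C\cup\{o\}$ with $o$ the centre of $C$. Then exactly one point of $P$, namely $o$, is off $C$, while the other $N-1$ points are co-circular on $C$. Being in convex position, $o$ must be an extreme point of $P$; as the centre of $C$, this forces the $N-1$ points on $C$ to occupy an open arc of angular length strictly less than $\pi$, for otherwise $o$ would fall inside their convex hull.

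To finish I would treat this residual configuration through the co-circular branch already present in the proof of Theorem \ref{c:unc}. Discarding the single centre $o$ leaves $\Omega(N)$ points on $C$, and the inscribed-angle theorem converts distinct angles among co-circular points into distinct values of differences of their arc-parameters; by the convexity-versus-sumsets estimate of Theorem \ref{th:conv}, too few distinct angles would force these parameters into an additively structured set, the only escape being parameters in (near) arithmetic progression along the arc, with the extra centre $o$ only adding further angles. The honest subtlety is that the arithmetic-progression-on-an-arc-together-with-its-centre family genuinely produces only $\Theta(N)$ angles, so this degenerate family must be set aside; once it is (it becomes a subset of a circle after removing the one centre point), the two hypotheses coincide for convex-position sets and the deduction from Theorem \ref{c:unc} is immediate.
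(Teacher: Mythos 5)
Your first paragraph is essentially the paper's own proof: the paper obtains the corollary from Theorem \ref{c:unc} via exactly the horizontal-sweep halving described just before the corollary's statement, giving $|P_1|,|P_2|\geq \lfloor N/2\rfloor -1$, with the order assumption taken as evident for mutually avoiding halves of a convex polygon. Your verification of Definition \ref{def:orderAssumption} is fine up to one slip: a point $x\in P_1$ collinear with two points $p,q\in P_2$ need not lie in the convex hull of $P_2$ (it may lie on the line $pq$ beyond the segment); the correct reason is that convex position forbids \emph{any} three collinear points of $P$, since the middle of three collinear points is not an extreme point.

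The genuine gap is your final step, and it cannot be repaired, because the configuration you propose to ``set aside'' is an actual counterexample to the corollary as literally stated. Take points with arguments in arithmetic progression on an arc of angular measure less than $\pi/2$, together with the centre $o$: as you yourself observe, this set is in convex position ($o$ is extreme since the arc points lie in a half-plane strictly separated from $o$, and each arc point is extreme in its radial direction), and it is not contained in any circle; yet it determines only $\Theta(N)$ distinct angles. Indeed, angles at $o$ are differences of the progression; inscribed angles at an arc point are half such differences, or $\pi$ minus half; and the angle at an arc point subtended by $o$ and another arc point is a base angle $(\pi-\delta)/2$ of an isosceles triangle with apex angle $\delta$ at $o$ -- all lying in $O(1)$ arithmetic progressions of length $O(N)$. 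So when $P\subseteq C\cup\{o\}$ the hypotheses of Theorem \ref{c:unc} genuinely fail (there is no admissible vertex off the circle and its centre), and no argument can bridge the two hypotheses: ``setting aside'' the family by deleting $o$ changes the set and proves nothing about $P$. What your analysis actually uncovers is a defect in the paper itself: its proof of Corollary \ref{c:convexNotInCircle} is precisely your first paragraph and silently ignores this case, and the corollary's hypothesis should read, as in Theorem \ref{c:unc}, that $P$ is not contained in the union of a circle and its centre -- under which corrected hypothesis your first paragraph is already a complete proof.
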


\subsection{Discussion}
We finish the introduction with a discussion of the proof of Theorem \ref{t:main} and Theorem \ref{t:unc}. We then discuss the order assumption and generalisations to the above theorems.

To take advantage of the order assumption, we are guided by the following heuristic. Suppose $N$ points define $\Theta(N)$ distinct angles. Then if we take two pairs of random points $(p,q)$ and $(s,t)$ in $P_2\times P_2$, the expectation of the number of vertices $x \in P_1$, such that the angles $pxq$ and $sxt$ are equal, is  $O(1)$. Indeed, if we take two random angles at a typical vertex $x$ -- a Beck point, namely such that there are $\Omega(N)$ directions from $x$, the odds that the two angles are equal are $O(1/N)$. However,  since there are altogether $O(N)$ distinct angles with the vertex at $x$, the value of an angle $pxq$, where $p$ and $q$ are {\em neighbours} (or close to one another) in the natural ordering, would repeat $\Omega(N)$ times. 
If the order assumption is satisfied, then for every $x\in P_1$, the order of points in $P_2$, arising by looking at the directions of the lines connecting $x$ with the points of $P_2$ is the same. Since all $x$ in $P_1$ agree on what being neighbours relative to the ordering in $P_2$ means, this implies that if $(p,q)$ and $(s,t)$ are two random pairs of {\em neighbours} in $P_2$, the expected number of points $x\in P_1$, so that angles $pxq$ and $sxt$ are equal becomes $\Omega(N)$. 

The proof of our main Theorem \ref{t:main} relies on the quantitative version of this observation.  A similar heuristic underlies Solymosi's reasoning in \cite[Section 4]{solymosi2023structure}. We remark that using order has recently been fruitful in many instances. In this paper, as well as \cite[Section 4]{solymosi2023structure} it is indispensable for a non-trivial lower bound.  In other instances earlier non-trivial results had been obtained by means of incidence theory, and taking advantage of proximity in order would strengthen them. This was done in a recent paper by Solymosi and Zahl \cite{solymosi2024improved} apropos of the so-called Elekes-Szab\'o problem, asking for an upper bound on the number of zeroes of a trivariate real polynomial $f(x,y,z)$ of  a $O(1)$ degree, on a Cartesian product $(x,y,z)\in A\times B\times C$.

Additive structure is again used to prove Theorem \ref{t:unc}. The points of the regular $n$-gon form an arithmetic progression of angles. If we add a point at the center of the circle, the new angles also form an arithmetic progression.
Any other point in the plane cannot share this additive structure. See Figure \ref{fig:additiveCircle}. One proves Theorem \ref{t:unc} by playing the angles created between points on the circle with angles created by a point external to the circle, one of these must be large.
\begin{center}
    \begin{figure}[h!]
        \centering
        \includegraphics[scale=0.4]{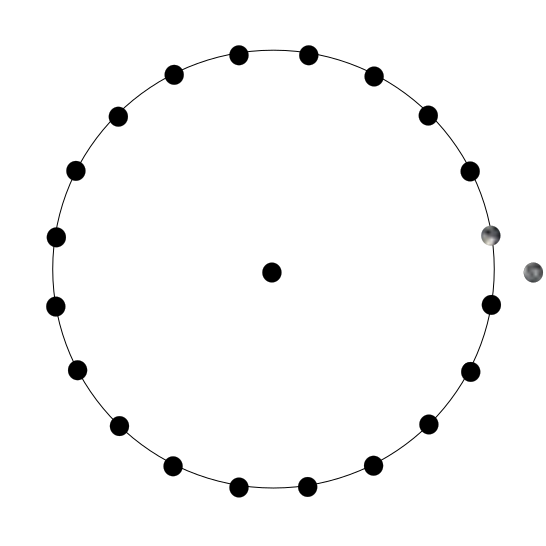}
        \caption{One plays the angles created at the shaded points off against each other. The convexity of the circle ensures that the angles created cannot share additive structure, so one must grow.}
        \label{fig:additiveCircle}
    \end{figure}
\end{center}
That a set and a convex image cannot both contain additive structure has been the focus of lots of recent work on the {\em convexity versus sumset} question. See \cite{bradshaw2022higher, bradshaw2023growth, hanson2022higher, hanson2023convexity, roche2023better, ruzsa2021distinct, ruzsa2022sumsets, stevens2022sum}, and references therein for more details. The circle provides the necessary convexity here and we quote the best known at the time of our writing this paper estimate due to Stevens and Warren \cite[Corollary 8]{stevens2022sum}, which was recently slightly improved by Bloom \cite{bloom_cont}. However, to prove Theorem \ref{t:main} (where several ensuing incidence bounds will be dominated by bound \eqref{e:upper}) the weaker exponent $\frac{5}{4}$ in the next bound, from the fountainhead paper by Elekes, Nathanson and Ruzsa \cite{ENR} already suffices.
\begin{theorem}\label{t:StevenWarren}
For any sufficiently large real finite set $B$ and a strictly convex function $f$, one has
\begin{equation}\label{e:css}
|B-B|+|f(B)-f(B)| \gg |B|^{13/10-\epsilon}\,.
\end{equation}
\label{th:conv}
\end{theorem}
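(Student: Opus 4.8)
The plan is to prove \eqref{e:css} by an incidence--energy argument: embed $B$ through its graph on the convex curve $\Gamma=\{(t,f(t)):t\in\R\}$ and reduce the sumset statement to an estimate for a higher additive energy of $B$. Write $D=B-B$ and $F=f(B)-f(B)$, and suppose towards a contradiction that $|D|,|F|\leq K$ with $K=o(|B|^{13/10})$. The first, soft, observation is that the map $(a,b)\mapsto(a-b,\,f(a)-f(b))$ from $B\times B$ to $D\times F$ is injective: for a fixed difference $d=a-b>0$ the quantity $f(b+d)-f(b)$ is strictly increasing in $b$, a direct consequence of strict convexity, so the pair $(d,\,f(a)-f(b))$ determines $(a,b)$. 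Equivalently, two distinct translates $\Gamma_b:=\Gamma-(b,f(b))$ meet in at most one point. Hence $\{\Gamma_b:b\in B\}$ is a family of $|B|$ pseudolines, each passing through the $|B|$ points $\{(b'-b,\,f(b')-f(b)):b'\in B\}\subset D\times F$.

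Injectivity by itself returns only the trivial bound $K\gg|B|$ (the joint energy of the pair $(B,f(B))$ equals $|B|^2$ and nothing more), so the heart of the argument is a higher-moment estimate. I would bound the third additive energy $E_3^{+}(B)=\sum_x r_{B-B}(x)^3$, where $r_{B-B}(x)=\#\{(a,b)\in B^2:a-b=x\}$. The geometric input is Szemer\'edi--Trotter for the pseudoline family above: counting incidences between the points of $D\times F$ and the mutually rich translates converts the strict convexity of $f$ into an upper bound for $E_3^{+}(B)$ in terms of $|D|$, $|F|$ and $|B|$. The squeezing lemma \cite{bradshaw2023growth, ruzsa2021distinct} is what controls the distribution of representations here: because the consecutive differences of $B$ increase monotonically, the level sets of $r_{B-B}$ are \emph{squeezed} into few dyadic scales, so one may run the incidence count scale by scale at the cost of only a power of $\log|B|$ -- this is the source of $\gtrsim$ rather than $\gg$. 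The target is an estimate of the shape $E_3^{+}(B)\lesssim|D|\,|F|\,|B|^{4/5}$ (together with the symmetric statement exchanging the roles of $B$ and $f(B)$), whose exponent is calibrated so that the final optimisation lands precisely on $13/10$; note this is self-consistent, since at criticality the trivial lower bound $E_3^{+}(B)\geq|B|^6/|D|^2$ already forces $E_3^{+}(B)\approx|B|^{17/5}$.

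With such an energy bound in hand the passage to the sumset inequality is routine. By H\"older, $E^{+}(B)^2\leq|B|^2E_3^{+}(B)$, and by Cauchy--Schwarz $|D|=|B-B|\geq|B|^4/E^{+}(B)$, with the analogous chain for $F$ through $f(B)$. Substituting the energy estimate gives $E^{+}(B)\lesssim K\,|B|^{1+2/5}$ in the balanced case $|D|=|F|=K$, whence $K\gtrsim|B|^4/E^{+}(B)\cdot K^{-1}$, i.e. $K\gtrsim|B|^{\,3/2-1/5}=|B|^{13/10}$. This contradicts $K=o(|B|^{13/10})$ and proves \eqref{e:css}.

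The main obstacle is exactly the energy estimate of the second paragraph. Injectivity hands us the joint energy for free, but to beat the trivial exponent one must show that $B$ and $f(B)$ cannot \emph{simultaneously} carry large third energy, i.e. that near-collisions in $B-B$ are incompatible with near-collisions in $f(B)-f(B)$ for strictly convex $f$. Extracting this from Szemer\'edi--Trotter with the sharp dependence needed for $13/10$, rather than the much weaker bound that a single application of the one-parameter translate family naively provides, is the quantitatively delicate part, and is where the state-of-the-art convexity machinery of Stevens and Warren \cite{stevens2022sum} is required.
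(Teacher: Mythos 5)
The first thing to note is that the paper does not prove this statement at all: it is imported verbatim from Stevens and Warren \cite[Corollary 8]{stevens2022sum}. So the only question is whether your sketch amounts to an independent proof, and it does not. The entire content of the theorem is concentrated in your ``target'' estimate $E_3^{+}(B)\lesssim |D|\,|F|\,|B|^{4/5}$, which you never prove, and you concede in your final paragraph that establishing it ``is where the state-of-the-art convexity machinery of Stevens and Warren is required.'' That makes the argument circular: the outer shell --- injectivity of $(a,b)\mapsto(a-b,\,f(a)-f(b))$ for strictly convex $f$, the H\"older step $E^{+}(B)^2\le |B|^2E_3^{+}(B)$, Cauchy--Schwarz $|D|\ge |B|^4/E^{+}(B)$, and the exponent bookkeeping landing on $13/10$ --- is standard and arithmetically consistent, but it reduces the theorem to an input at least as deep as the theorem itself.

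Worse, the target estimate is false in the generality you state it (all $B$, all strictly convex $f$): take $B=\{1,\dots,n\}$ and $f(x)=x^2$. Then $E_3^{+}(B)=\sum_x r_{B-B}(x)^3\sim n^4$, while $|D|\sim n$ and $|F|\le n^2$, so $|D|\,|F|\,|B|^{4/5}\le n^{19/5}$, and the claimed inequality fails by a power $n^{1/5}$, which no $\log$ losses can absorb. One could hope to salvage it under the contradiction hypothesis $|D|,|F|\le K=o(|B|^{13/10})$, but then at criticality it asserts that $E_3^{+}(B)$ matches, up to logarithms, its H\"older minimum $|B|^6/|D|^2$, i.e.\ that $r_{B-B}$ is essentially flat on its support --- a far stronger statement than anything a Szemer\'edi--Trotter count over the one-parameter translate family can deliver (as you yourself note, that family alone only returns $K\gg|B|$), and no mechanism is offered for extracting it. Your invocation of the squeezing lemma is also off: in \cite{bradshaw2023growth,ruzsa2021distinct}, and in this paper's Claim \ref{claim:NormalIntervals}, squeezing controls how many elements of a set of the form $D'+(D-D)$ can land in consecutive gaps of $D'$; it says nothing about the level sets of $r_{B-B}$ concentrating on few dyadic scales. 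In short: the reductions surrounding the core are fine, but the core is an unproven --- and, as stated, false --- lemma, so the proposal does not constitute a proof; the honest options are to reproduce the Stevens--Warren argument or to cite it, as the paper does.
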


\medskip

\paragraph{\textbf{Generalisations}}
The reader can easily generalise the forthcoming proof to an asymmetric case of the parameter values $(N, N_1, N_2)$.  We present the case $N_1,N_2=\Omega(N)$, whose asymmetric adaptation is straightforward after chasing through the proof's estimates. We only remark on the still nontrivial case $N^\epsilon\leq N_1\leq N_2$ and $N_2 = \Omega(N)$. 

In our argument, the latter assumption may be slightly weakened. We do not pursue this, as the random point set will not satisfy the order assumption in any meaningful way.
\begin{remark} $\,$
\begin{enumerate}
\item Suppose the $N$-point set satisfies the order assumption with $|P_1|=N_1$ and $|P_2|=\Omega(N)$. Then $K\gg N_1^{1/4}$ or $\,\Omega(N/K)$ points of $P$ are co-circular. 
 \item The proof of Theorem \ref{t:main} holds under a weaker assumption that there are at most $KN$ angles with a base in $P_2$ from every point in $P_1$.\end{enumerate}
\end{remark}

In a general set, one would not expect $N_1$ and $N_2$ this large. For $N$ non-collinear points in $\R^2$, Aronov et al. \cite{aronov1991crossing} and Valtr \cite{valtr1997mutually} show the largest mutually avoiding sets $P_1$, $P_2$ are both of size $\Theta(N^{1/2})$. One can thus take $N_1, N_2=\Omega(N^{1/2})$. For other results using avoiding sets see Mirzaei and Suk \cite{mirzaei2020positive} and Pack, Rubin and Tardos \cite{pach2019planar}.
Convex position is even harder to achieve. For $N$ non-collinear points one can only guarantee $N_1=N_2=\Omega(\log N)$. See \cite{suk2017erdHos} and references therein on the Erd\H{o}s-Szekeres convex polygon problem.

The Elekes-Szab\'o problem is closely related to the already mentioned Elekes-R\'onyai problem (see, e.g. the references in \cite{solymosi2024improved}), asking for the lower bound on cardinality of the range of a bivariate bounded degree polynomial $g(x,y)$ on a Cartesian product $(x,y)\in A\times B$. The problem has been extended to rational functions $g(x,y)$, see \cite{bukh2012sum, de2018survey, elekes2000combinatorial}.

A trivial bound cannot be improved, of course, if after changing each variable independently, one gets $g(x,y) \to h(x+y)$, with the new variables in an Abelian group. This group may be $(\mathbb R,+)$, $(\mathbb R^*,\times)$ or $S^1$, the circle.
The latter case arises when $g(x,y)$ is a scalar function of $\frac{x+y}{1-xy}$, after the variable change $x=\tan \phi$, $y=\tan \psi$, so $\frac{x+y}{1-xy} = \tan(\phi+\psi)$. This establishes an immediate connection between the Elekes-R\'onyai problem with estimating the minimum number of angles with a fixed vertex for a point set, lying on a bounded degree curve.
\section{Proof of Theorem \ref{t:main}}

The proof is organised in several steps, with the goal being to double count the number of edges of a bipartite graph. We first provide formal definitions used throughout the proof, including the bipartite graph. In the second subsection, we prove a lower bound on the number of edges using pigeonholing and convexity. We finish the proof by considering this bipartite graph as an incidence graph, and provide an upper bound by examining the geometry of the associated curves.

\subsection{Definitions}~

We will assume throughout this section that $P$ is a set if $N$ points in $\R^2$ that satisfies our order assumption with $P\supseteq P_1\sqcup P_2$, where $|P_1|, |P_2| = \Omega(N)$.
To distinguish between points in the sets $P_1$ and $P_2$ we will use $x$ and $y$ to denote points in $P_1$ and $p, q, s, t$ to denote points in $P_2$.

A key object in the proof is the set of directions formed at a point. Let $x\in P_1$, we will use ${D}_x(P_2)$ to denote the set of directions $x$ forms with points in $P_2$.

It will be useful for us to represent these directions as the angles they make with a fixed reference line $l$, say the horizontal axis. We need to be precise about the specific orientation of the angle that we are taking. To this end, orientate the horizontal axis as travelling from negative to positive, orient a line through $x$ and a point $p\in P_2$ as travelling from $x$ to $p$. We specify that we want the anti-clockwise angle formed by rotating the oriented horizontal axis onto the oriented line $\overrightarrow{xp}$ about their point of intersection. See Figure \ref{fig:directionAngle}. One can see that if $p$ is below $x$ according to the vertical axis, the direction will be in $(\pi, 2\pi)$, otherwise it will be in the range $(0,\pi)$.
\begin{figure}[ht]
    \centering
    \includegraphics[scale=0.3]{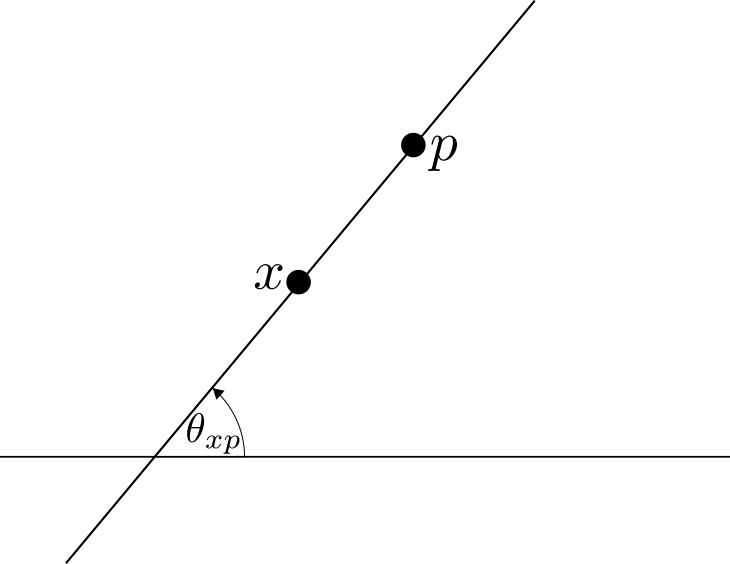}
    \caption{A direction $\overrightarrow{xp}$ represented as the oriented angle $\theta_{xp}$.}
    \label{fig:directionAngle}
\end{figure}

Thus, by deciding the orientation of the horizontal axis we can ensure that, for any given point $x$ at least half of the orientated angles centred at $x$ lie in the range $(0,\pi)$. So, throughout, we consider oriented angles in the range $(0,\pi)$ only.

We assume that $P$ has at most $KN$ distinct oriented angles, with $K=\Omega(1)$. We will use $A(P)$ to denote the set of distinct oriented angles formed by triples of points in $P$.
\[ A(P) = \{ \angle pqr: p,q,r \in P\} \cap (0, \pi).\]

For the upper bound to work, we need to take some care setting up our bipartite graph $G$. Let $G$ have vertex set $P_1 \cup (P_2\times P_2)$. We will use $|G|$ to denote the number of edges of $G$ and use $(x, (p,s)) \in G$ to denote that $(x,(p,s))$ is an edge in G.

We will need to make frequent use of the order assumption induces on $P_2$. It is not technically an order, but we will only use the concept of neighbours (and near-neighbours), so the below definition suffices.
\begin{definition}\label{def:OrderOnP2}
For each point $x$ in $P_1$, draw small circles about each such point so that no point of $P_2$ is contained within any such circle. Consider the projection of $P_2$ onto each such circle.
By the order assumption, if we start at a fixed point $p$ in $P_2$ and go around these circles clockwise, then we will meet all of the points of $P_2$ in the same order.

We thus define $t$ to be the \textit{neighbour} of $s$ if $t$ is the first element of $P_2$ one finds when one travels clockwise from $s$ on any such circle.
Similarly, $t$ \textit{is within 5 of} $s$ if $t$ is one of the first 5 elements of $P_2$ one meets when travelling clockwise from $s$ on any of these circles.
\end{definition}

To define the edges of the graph, we use this order. Let $p,s \in P_2$ and let $q$ be the neighbour of $p$ and $t$ the neighbour of $s$. Then
\[ (x,(p,s)) \in G \Leftrightarrow \angle pxq = \angle sxt \text{ and both angles are in } (0,\pi). \]
Unfortunately, we will have to adapt this definition for a technical reason. 
In our argument, $p$ and $q$ being neighbours is too strict, we need instead that $q$ is within 5 positions of $p$ in the order. 
This is going to increase the number of edges in $G$, but only by an amount that effects the implicit constants in our upper bound. 
We will also need that $q=t$ ensures that $p=s$. This turns out to be a simple consequence of the order assumption.
We state this in the following lemma, it is proved in Section \ref{sec:proofOfLemmata}.
\begin{restatable}{lemma}{neighbourAssumption}\label{l:neighbourAssumption}
    Suppose, a $N$-point set $P\subset \R^2$, satisfies the order assumption with $|P_1|,\,|P_2|=\Omega(N)$. 
    Let $x\in P_1$ and $p,q,s \in P_2$. If the orientated angles $\angle pxq$ and $\angle sxq$ are equal then $p=s$.
\end{restatable}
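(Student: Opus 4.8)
The plan is to reduce the equality of oriented angles to a statement about directions, and then to invoke the clause of the order assumption (Definition \ref{def:orderAssumption}) that forbids a point of $P_1$ from lying on a line through two distinct points of $P_2$. The entire content of the lemma is that, once the vertex $x$ and the reference point $q$ are fixed, the \emph{oriented} angle $\angle pxq$ determines the ray $\overrightarrow{xp}$ uniquely. It is essential here that the angle is oriented, since the ordinary unsigned angle at $x$ between $\overrightarrow{xp}$ and $\overrightarrow{xq}$ does not determine $\overrightarrow{xp}$: reflecting $p$ across the line through $x$ and $q$ produces a distinct ray realising the same unsigned angle, so the unsigned version of the statement is simply false. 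I would therefore foreground the role of the sign convention fixed in the construction of $\angle pxq$.

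First I would record the description of $\angle pxq$ in terms of the direction angles $\theta_{xp}$ fixed just before the lemma (see Figure \ref{fig:directionAngle}). Under the convention retaining only directions in $(0,\pi)$, all three directions $\theta_{xp},\theta_{xq},\theta_{xs}$ lie in a common interval of length $\pi$, and the oriented angle is, up to the global sign convention, the signed difference $\theta_{xp}-\theta_{xq}$. For fixed $x$ and fixed $q$ this is a translate of the (injective) direction map $\theta_{xp}$, hence the assignment $\overrightarrow{xp}\mapsto \angle pxq$ is strictly monotone and thus injective on the admissible range. This injectivity step is where the orientation does all the work, and I would state it carefully so as to exclude the reflected solution $\theta_{xp}+\theta_{xs}=2\theta_{xq}$ that the unsigned angle would otherwise admit.

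Applying injectivity to the hypothesis $\angle pxq=\angle sxq$, which has the same vertex $x$ and the same reference point $q$, yields $\theta_{xp}=\theta_{xs}$; that is, the rays $\overrightarrow{xp}$ and $\overrightarrow{xs}$ coincide. Consequently $p$ and $s$ lie on a common ray emanating from $x$, so $x$, $p$ and $s$ are collinear and $x$ lies on the line through $p$ and $s$. Supposing toward a contradiction that $p\neq s$, these are two distinct points of $P_2$, and $x\in P_1$ lies on the line joining them, contradicting the order assumption. Hence $p=s$. I expect the only genuinely delicate point to be the injectivity claim: one must verify that equality is assumed for the oriented angle rather than its magnitude, so that only the coincident-direction solution survives and the collinearity argument can be closed.
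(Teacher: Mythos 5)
Your proof is correct and follows essentially the same route as the paper: the paper likewise observes that equality of the oriented angles $\angle pxq=\angle sxq$ forces $p$ and $s$ onto the same half-line emanating from $x$, and then invokes the clause of the order assumption forbidding a point of $P_1$ from lying on a line through two points of $P_2$. The only difference is that you spell out, via injectivity of the translated direction map $\theta_{xp}\mapsto\theta_{xp}-\theta_{xq}$, the coincidence of rays that the paper asserts in one line, and your remark that the unsigned version of the statement fails is a correct and worthwhile observation.
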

It is the order assumption, that all $x\in P_1$ agree as to whether or not $q$ is a neighbour of $p$ in $P_2$, that has enabled us to identify the second set of vertices in $G$ with $P_2\times P_2$, rather than a larger subset of $(P_2\times P_2)\times (P_2\times P_2)$. Our argument fails if the second set of vertices set is as large as $P_2^4$.

\subsection{Lower bound: Pigeonholing}
We aim to prove the following proposition.
\begin{proposition}\label{prop:lowerBound}
Suppose, a  $N$-point set $P\subset \R^2$, satisfies the order assumption with $|P_1|,\,|P_2|=\Omega(N)$. Suppose that $P$ defines at most $KN$ distinct angles. Let $x\in P_1$ and $p,q,s,t \in P_2$, by the order assumption we have an order $<$ on $P_2$.

Define a bipartite graph $G$ on the vertex set $P_1 \cup (P_2\times P_2)$ where $(x,(p,s))$ is an edge if there is some $q$ within 5 of $p$ with respect to the $<$ order and, similarly, there is some $t$ within 5 of $s$ so that $\angle pxq = \angle sxt$. Then,
\[|G| \gg N^3/K^2.\]
\end{proposition}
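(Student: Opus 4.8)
The plan is to lower-bound $|G|$ by counting, for each fixed $x\in P_1$, the number of pairs $(p,s)\in P_2\times P_2$ that contribute an edge, and then summing over the $\Omega(N)$ points $x$. Fix a point $x\in P_1$. By the order assumption, the points of $P_2$ are arranged in a definite cyclic order as seen from $x$, and this order is the same for every $x$; in particular the notion of "neighbour'' and "within $5$'' is well defined and $x$-independent. For a fixed $x$, list the $|P_2|=\Omega(N)$ consecutive oriented angles $\theta_i=\angle p_i x p_{i+1}$ formed by pairs of neighbours $p_i,p_{i+1}$ in the order. These $\Omega(N)$ "gap angles'' are a sub-multiset of $A(P)$, which by hypothesis has at most $KN$ distinct values; but actually the relevant count is the number of \emph{distinct} angles with vertex $x$, which is $O(KN)$ as well (indeed $O(N)$ per vertex is the heuristic, but we only need the global bound $KN$).

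The heart of the lower bound is a Cauchy–Schwarz / pigeonholing step. For the fixed $x$, let $r_x(\theta)$ denote the number of neighbour-pairs $(p_i,p_{i+1})$ with $\angle p_i x p_{i+1}=\theta$. Then $\sum_\theta r_x(\theta)=\Omega(N)$, since there are $\Omega(N)$ consecutive neighbour gaps, and the sum ranges over at most $O(KN)$ distinct values of $\theta$ (those realisable as angles from $x$). The number of ordered pairs of neighbour-gaps at $x$ sharing a common angle value is $\sum_\theta r_x(\theta)^2$, and by Cauchy–Schwarz
\[
\sum_\theta r_x(\theta)^2 \;\geq\; \frac{\bigl(\sum_\theta r_x(\theta)\bigr)^2}{\#\{\theta\}} \;\gg\; \frac{N^2}{KN}\;=\;\frac{N}{K}.
\]
Each such coincidence, i.e.\ a pair of neighbour-gaps $(p,q)$ and $(s,t)$ with $\angle pxq=\angle sxt$, records the pair $(p,s)$ and hence produces an edge $(x,(p,s))\in G$ (the within-$5$ relaxation in the definition only adds edges, so it can only help). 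Thus $x$ contributes $\gg N/K$ edges. I would need to check that these edges are genuinely distinct as $(p,s)$ varies: Lemma~\ref{l:neighbourAssumption} guarantees that the value $\theta$ together with the gap-partner $q=t$ forces $p=s$, so distinct coincidence-pairs give distinct pairs $(p,s)$, up to the bounded within-$5$ multiplicity which is absorbed into the implied constant.

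Summing over the $|P_1|=\Omega(N)$ choices of $x$ then gives
\[
|G| \;\gg\; N\cdot\frac{N}{K}\,,
\]
which is $N^2/K$ — a factor of $N/K$ short of the claimed $N^3/K^2$. I expect the main obstacle, and the crux of the argument, to be sharpening this per-vertex bound: the quantity $\sum_\theta r_x(\theta)^2$ should really be bounded below by $\gg N^2/K$ rather than $N/K$, which follows if each gap-value $\theta$ is typically repeated $\gg N/(KN)\cdot N=\Omega(1/K)\cdot N$ times, i.e.\ if one uses that there are only $O(N)$ \emph{distinct} angles per vertex (the true heuristic) instead of the crude $O(KN)$. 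Running the Cauchy–Schwarz with $O(N)$ distinct angle-values per vertex yields $\sum_\theta r_x(\theta)^2 \gg N^2/N = N$ per vertex — still not enough — so the genuine gain must come from counting \emph{all} pairs within $5$, exploiting that each of the $\Omega(N)$ gaps can be matched against $\Omega(N/K)$ others with the same value across the \emph{whole} vertex set simultaneously, and reindexing the double count so that the outer sum over $x\in P_1$ interacts with the repetition structure. I would therefore set up the count as a sum over \emph{triples} $(x,(p,s))$ and apply Cauchy–Schwarz over the $P_1$-fibres of $G$ viewed as an incidence structure, using that the total number of (angle-value, vertex) incidences is $\Omega(N^2)$ while distinct angle-values number $O(KN)$, to extract the extra factor of $N/K$. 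Pinning down exactly which convexity/order input supplies this final factor is the step I would scrutinise most carefully.
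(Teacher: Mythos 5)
There is a genuine gap, and you have correctly located it yourself: your Cauchy--Schwarz runs over the $O(KN)$ distinct angle values realisable at $x$, which gives only $\sum_\theta r_x(\theta)^2 \gg N/K$ per vertex and $|G|\gg N^2/K$ in total, a factor $N/K$ short. The missing idea is not a cleverer reindexing of the double count over triples or over $P_1$-fibres (this cannot work: the paper's bound $N^2/K^2$ is achieved \emph{pointwise for each fixed} $x$, and with $\Omega(N)$ gaps spread over a set of $O(KN)$, or even $O(N)$, admissible values no averaging over $x$ recovers the loss). The actual mechanism is a \emph{squeezing} argument that drastically shrinks the set of values that \emph{consecutive} gaps can take, from $O(KN)$ down to $O(K^2)$. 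Concretely: the bound $|D_x-D_x|\leq 2KN$ of \eqref{one} is upgraded via the Pl\"unnecke--Ruzsa corollary (Corollary \ref{col:ThinPluennecker}) to a three-fold sumset bound $|D_x'+(D_x-D_x)|\leq 16K^2N$ for a half-density subset $D_x'\subseteq D_x$, as in \eqref{two}. If a consecutive gap $d_{i+1}-d_i$ of $D_x'$ (in a ``normal'' interval) exceeded the $(49K^2+1)$-st smallest positive difference $\delta_{49K^2+1}$ of $D_x-D_x$, then the $49K^2+1$ points $d_i+\delta_j$ would all land in $[d_i,d_{i+1})$, violating normality; hence the gaps of normal intervals lie among the $49K^2$ smallest differences (Claim \ref{claim:NormalIntervals}). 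Running your Cauchy--Schwarz with this support of size $O(K^2)$ instead of $O(KN)$ gives $|N(x)|\gg |D_x''|^2/K^2 \gg N^2/K^2$ per vertex, and summing over $x\in P_1$ yields $|G|\gg N^3/K^2$.

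A secondary inaccuracy: you treat the within-$5$ relaxation as a harmless enlargement of $G$ (``only adds edges''), but in the paper's argument it is load-bearing, not cosmetic. Pl\"unnecke--Ruzsa forces passage to the subset $D_x'$, so the endpoints of a normal interval are consecutive in $D_x'$ but need not correspond to neighbours in $P_2$; property (ii) of normality (at most $4$ points of $D_x$ inside the interval) is exactly what guarantees the associated points of $P_2$ are within $5$ of each other in the common order, so that equal gaps genuinely produce edges of $G$. Your use of Lemma \ref{l:neighbourAssumption} to ensure distinctness of the edges $(x,(p,s))$ is in the right spirit and matches the paper's bookkeeping, but without the sumset step above the proposal does not reach the stated bound.
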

Under this looser definition of the graph, for a fixed $p$ and $s$ there are multiple choices for $q$ and $t$ which can give this equality of angles.
As long as there is one such pair, among the 25 possibilities, we add the edge $(x,(p,s))$ to $G$.

Take any $x \in P_1$, with ${D}_x(P_2)$ being the set of directions from $x$ to the points of $P_2$. We can choose the zero direction, depending on $x$, so that there are at least $|P_2|/2$ directions in the interval $(0,\pi)$. Let $D_x$ be the set of these directions, written in the increasing order. Note that $|D_x| \geq |P_2|/2 \gg N$ as the angles each element of $P_2$ makes with the zero direction will be unique (and we ensured that there are at least $|P_2|/2$ such angles in $(0, \pi)$).

Consider positive differences in $D_x-D_x$, as one can see from Figure \ref{fig:diffsAsAngles}, they are a subset of distinct angles formed by $P$ at $x$. 
\begin{figure}[ht]
    \centering
    \includegraphics[scale=0.35]{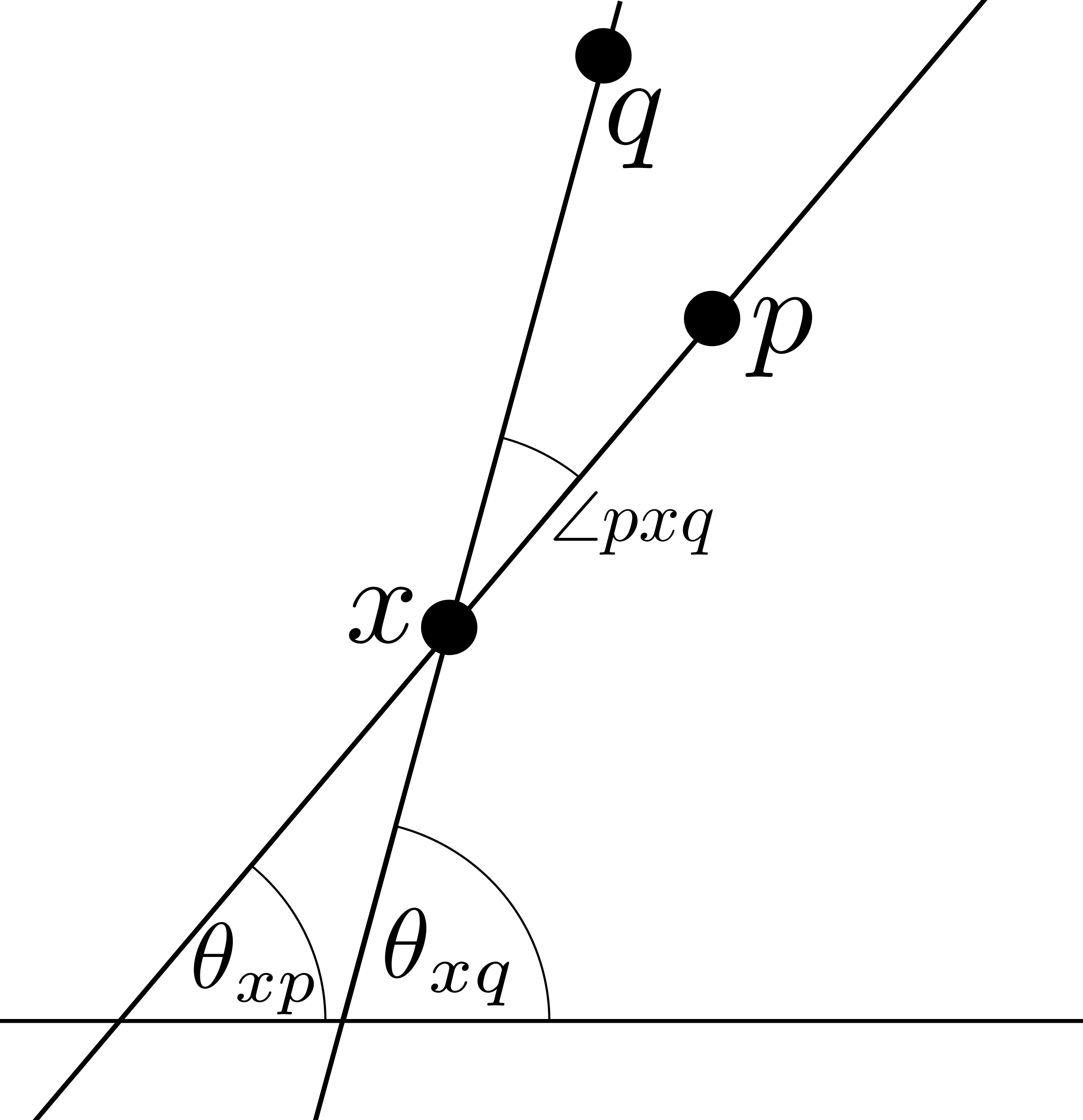}
    \caption{The angle $\angle pxq$ is the difference $\theta_{xq}-\theta_{xp}$, where $\theta_{xp}, \theta_{xq} \in D_x$.}
    \label{fig:diffsAsAngles}
\end{figure}

Noting that any negative difference in $D_x -D_x$ is $-1$ times a positive difference, and hence also an angle at $x$. For example, in Figure \ref{fig:diffsAsAngles} one can see the negative difference $\theta_{xp}-\theta_{xq}$ as the negative angle $\angle qxp$. As $A(P)$ contains only the positive angles, we have the bound
\begin{equation}\label{one}
|D_x-D_x| \leq 2|A(P)| \leq 2KN.
\end{equation}

We want to convert the above bound concerning the difference set $D_x-D_x$ into one about the set $D_x + (D_x - D_x)$. This will then allow us to use that consecutive intervals in a real set $D$ are quite dense with elements of $D+(D-D)$. This observation goes back to at least the paper \cite{ruzsa2021distinct} by Solymosi. Recently it found use in several works on convexity and sumsets of reals, see, for example, \cite{bradshaw2022higher, bradshaw2023growth, hanson2022higher, hanson2023convexity,roche2023better, ruzsa2022sumsets}. 
To achieve this we need a corollary of Pl\"unnecke-Ruzsa, see e.g. \cite[Theorem 1.2]{gyarmati2008plunnecke} and \cite[Corollary 1.5]{katz2008slight}, \cite[Exercise 6.5.1]{tao2006additive}.
\begin{corollary}\label{col:ThinPluennecker}
Let $X, B_1, \ldots, B_k$ be finite subsets in a commutative group. Suppose that $|X+B_i| \leq \alpha_i |X|$. Then there is $X' \subseteq X$ with $|X'| > |X|/2$ so that 
\[|X' + B_1 + \cdots + B_k| \leq \alpha_1\cdots\alpha_k 2^k |X|\]
\end{corollary}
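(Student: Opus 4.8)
The plan is to read the statement as a \emph{large-subset} (``robust'') form of the asymmetric Plünnecke--Ruzsa inequality, and to deduce it from the ordinary asymmetric form by a greedy exhaustion that controls exactly how much of $X$ is discarded. The input I would take off the shelf is the distinct-summand Plünnecke--Ruzsa inequality in the form of the cited results \cite{gyarmati2008plunnecke, katz2008slight, tao2006additive}: for any finite set $Z$ and finite sets $C_1,\dots,C_k$ with $|Z+C_i|\le\gamma_i|Z|$, there is a \emph{nonempty} $W\subseteq Z$ with
\[ |W+C_1+\cdots+C_k|\le \gamma_1\cdots\gamma_k\,|W|. \]
It is essential here to keep the summands separate: replacing $C_1,\dots,C_k$ by their union $\bigcup_i C_i$ and applying the single-set Plünnecke inequality would only give $(\sum_i\gamma_i)^k$ in place of $\gamma_1\cdots\gamma_k$, losing a factor of order $k^k$ by AM--GM and destroying the claimed bound. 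The asymmetric (distinct-summand) sharpness is exactly what is needed.

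With this input, I would write $S=B_1+\cdots+B_k$ and build pairwise disjoint ``good'' pieces $Y_1,Y_2,\dots\subseteq X$ greedily. Having chosen $Y_1,\dots,Y_{j-1}$ with union $U_{j-1}$, stop if $|U_{j-1}|>|X|/2$; otherwise put $Z_j=X\setminus U_{j-1}$. The point of stopping exactly at the halfway mark is that before stopping we have $|U_{j-1}|\le|X|/2$, hence $|Z_j|\ge|X|/2$, and therefore the doubling of each $B_i$ over the current ambient set stays controlled:
\[ |Z_j+B_i|\le |X+B_i|\le \alpha_i|X|\le 2\alpha_i|Z_j|. \]
Applying the asymmetric Plünnecke--Ruzsa input to $Z_j$ with $\gamma_i=2\alpha_i$ yields a nonempty $Y_j\subseteq Z_j$ with $|Y_j+S|\le \prod_i(2\alpha_i)\,|Y_j|=2^k\alpha_1\cdots\alpha_k\,|Y_j|$. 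Since each $Y_j$ is nonempty the union strictly grows, so the process halts with $X':=\bigsqcup_j Y_j$ satisfying $|X'|>|X|/2$.

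Finally I would add up the pieces. Because $X'=\bigsqcup_j Y_j$ is a disjoint union, $X'+S=\bigcup_j(Y_j+S)$, and subadditivity of cardinality together with the per-piece bound gives
\[ |X'+S|\le \sum_j|Y_j+S|\le 2^k\alpha_1\cdots\alpha_k\sum_j|Y_j| = 2^k\alpha_1\cdots\alpha_k\,|X'|\le 2^k\alpha_1\cdots\alpha_k\,|X|, \]
which is the asserted inequality. The main obstacle --- and the whole reason the factor is $2^k$ rather than $k^k$ or worse --- is the tension between wanting $X'$ large (more than half of $X$) and wanting the sharp product $\alpha_1\cdots\alpha_k$: the exhaustion resolves it by discarding at most half of $X$, which simultaneously guarantees $|X'|>|X|/2$ and keeps every ambient doubling within a factor $2$ of the original $\alpha_i$, so that each of the $k$ summands contributes only a single extra factor of $2$.
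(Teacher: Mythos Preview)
Your argument is correct. The greedy exhaustion from the nonempty-subset asymmetric Pl\"unnecke--Ruzsa inequality to the large-subset version is the standard route, and every step checks out: the stopping rule guarantees $|Z_j|\ge |X|/2$ at the moment you apply the input, so each $\gamma_i=2\alpha_i$; the pieces are disjoint so $\sum_j|Y_j|=|X'|$; and subadditivity of $|\cdot|$ over the union finishes it.

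As to comparison: the paper does not supply a proof of this corollary at all. It quotes the statement as a known consequence of Pl\"unnecke--Ruzsa and refers the reader to \cite{gyarmati2008plunnecke}, \cite{katz2008slight}, and \cite[Exercise~6.5.1]{tao2006additive}. Your write-up is essentially the intended exercise solution: those references give the distinct-summand inequality producing a nonempty good subset, and the exhaustion/covering trick upgrades ``nonempty'' to ``more than half'' at the cost of the factor $2^k$. So there is no alternative approach in the paper to contrast with --- you have filled in exactly the proof the citations point to.
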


Applying Corollary \ref{col:ThinPluennecker} directly gives us a $D_x' \subseteq D_x$ where $|D_x'| \geq |D_x|/2$ and
\begin{equation}\label{eq:Pluennecker}
    |D'_x+(D_x-D_x)| = |-D_x + D_x' + D_x| \leq 4\cdot\frac{|-D_x+D_x'||-D_x+D_x|}{|-D_x|}
\end{equation}
We note that $|-D_x+D_x'| \leq |D_x - D_x|$ and apply \eqref{one} twice to obtain
\begin{equation}\label{two}
|D'_x+(D_x-D_x)| \leq 16 K^2 N.
\end{equation}
Since each $D_x'$ is ordered, consider the neighbouring intervals $[d_i,d_{i+1})$ with endpoints in $D_x'$. Call such an interval {\em normal} if 
\begin{enumerate}[(i)]
    \item The number of elements of $D'_x+(D_x-D_x)$ in this interval is less than $49 K^2$.
    \item The interval $[d_i,d_{i+1})$ contains at most $4$ elements of $D_x$ (not including the endpoints).
\end{enumerate}
Note that the constant 49 is not optimised as we will suppress it in the asymptotic notation. We write it explicitly here to streamline the proof. We need two facts about normal intervals, we include a quick proofs of both at the end of the section.
\begin{claim}\label{claim:NormalIntervals}~
    \begin{enumerate}
        \item At least a third of the intervals $[d_i,d_{i+1})$ with endpoints in $D'_x$ are normal.
        \item The diameter of normal intervals is among the first $49 K^2$ positive differences in $D_x$.\\
        Formally, write the positive elements of $D_x-D_x$ in the increasing order as $\delta_1 < \delta_2 < \cdots$. Then, for all normal intervals $[d_i,d_{i+1})$, we have
        \[ d_{i+1}-d_i < \delta_{49 K^2 + 1}. \]
    \end{enumerate}
\end{claim}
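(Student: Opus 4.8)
The plan is to prove the two parts of Claim \ref{claim:NormalIntervals} separately, each by a counting/pigeonholing argument against the global bounds \eqref{one} and \eqref{two}.

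For part (1), I would argue that most intervals are normal by bounding the number of \emph{bad} intervals that fail condition (i) or condition (ii). Let $m = |D_x'| - 1$ be the number of intervals $[d_i,d_{i+1})$ with consecutive endpoints in $D_x'$; since $|D_x'| \geq |D_x|/2 \gg N$, we have $m \gg N$. First, count the intervals violating (i): each such interval contains at least $49K^2$ elements of $D_x' + (D_x - D_x)$, and the interiors of these half-open intervals are disjoint, so the number of violators is at most $|D_x' + (D_x - D_x)| / (49K^2) \leq 16K^2N / (49K^2) = \frac{16}{49}N$. Choosing the constant $49$ (rather than something smaller) is exactly what makes this fraction strictly less than $\frac{1}{3}m$ up to the $\Omega(N)$ slack, which is why the authors flagged that $49$ is hand-tuned. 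Next, count the intervals violating (ii): an interval containing at least $5$ elements of $D_x$ in its interior ``uses up'' those elements, and since the intervals are essentially disjoint and $D_x \subseteq D_x' + (D_x-D_x)$ (as $0 \in D_x - D_x$), at most $|D_x|/5$ intervals can contain $5$ or more elements of $D_x$; this is at most $\frac{1}{5}|D_x| \leq \frac{2}{5}m$ roughly. I would then need to check that the total fraction of bad intervals is less than $\frac{2}{3}$, leaving at least a third normal. The arithmetic here is where the constants must be reconciled carefully — I suspect the intended reading is that the two conditions together with the precise value $49$ are calibrated so the combined bad fraction is provably below $2/3$, and I would present the bound in terms of the $\Omega(N)$ lower bound on $m$ to absorb lower-order terms.

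For part (2), the argument is a direct consequence of condition (i) together with the structure of $D_x' + (D_x - D_x)$. Fix a normal interval $[d_i, d_{i+1})$ and let $\ell = d_{i+1} - d_i$ be its diameter. The key observation is that for each of the smallest positive differences $\delta_1 < \delta_2 < \cdots$ in $D_x - D_x$, the translate $d_i + \delta_j$ lies in $D_x + (D_x - D_x)$, and — provided $\delta_j < \ell$ — it lands strictly inside the interval $(d_i, d_{i+1})$. Since $D_x' \subseteq D_x$ and $d_i \in D_x'$, the point $d_i + \delta_j$ lies in $D_x' + (D_x - D_x)$. Thus if $\ell \geq \delta_{49K^2 + 1}$, then all of $d_i + \delta_1, \ldots, d_i + \delta_{49K^2+1}$ would be distinct elements of $D_x' + (D_x - D_x)$ lying in $[d_i, d_{i+1})$, contradicting condition (i), which caps that count at $49K^2$. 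Hence $\ell < \delta_{49K^2+1}$, which is exactly the claimed bound.

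The main obstacle I anticipate is part (1), specifically reconciling all the explicit constants so that the two bad-interval counts provably sum to less than $\frac{2}{3}$ of the total number of intervals. This requires being careful about the $\Omega(N)$ versus exact $|D_x|/2$ bounds, about whether endpoints are double-counted across adjacent intervals, and about the precise disjointness of the half-open intervals when counting elements of $D_x' + (D_x - D_x)$; the clean logical core is part (2), which follows almost mechanically from condition (i). I would keep the constants symbolic where possible and only plug in $49$ at the final comparison, mirroring the authors' stated intent to suppress it asymptotically.
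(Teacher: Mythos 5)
Your proposal is correct and follows essentially the same route as the paper: part (1) is the paper's double-counting of elements of $D_x'+(D_x-D_x)$ and of $D_x$ across the disjoint intervals against the bounds \eqref{one} and \eqref{two} (you phrase it as directly counting bad intervals, the paper as a contradiction from supposing two thirds fail, which is the same union-bound argument), and part (2) is exactly the paper's translate argument placing $d_i+\delta_j$ inside a normal interval to contradict property (i). The constant bookkeeping you flag in part (1) is loose in the paper as well, which explicitly disclaims optimising $49$ and absorbs the $\Omega(N)$-constants into the asymptotic notation, so your caution there matches the paper's own treatment.
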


Hence, for each $x\in P_1$ we take $D''_x$ to be the set of left endpoints $d_i$ in $D_x'$ so that the neighbouring interval $[d_i,d_{i+1})$ is normal. Property (1) of Claim \ref{claim:NormalIntervals} gives $|D''_x|=\Omega(N)$. By property (2) of Claim \ref{claim:NormalIntervals} we can partition $D_x''$ into $\Omega(N/K^2)$ classes by the difference $d_{i+1}-d_i$.

Let $\nu_x(\delta)$ be the size of the difference class where the endpoints of a normal interval $[d_i, d_{i+1})$, with $d_i \in D_x''$ and $d_{i+1}\in D_x'$ satisfy $d_{i+1}- d_i = \delta$. That is
\[\nu_x(\delta) = \{d_{i} \in D_i'' : d_{i+1} \in D_x' \text{ and } d_{i+1}- d_i = \delta\}\]
Let $N(x)$ denote the neighbours of $x$ in our bipartite graph $G$. 
By the definition of edges in our bipartite graph, any pair of normal intervals at $x$ chosen from the same difference class gives a distinct edge in $G$.

Indeed, property (ii) of normality ensures there are at most 4 elements of $D_x$ in the interval. Thus,
the point of $P_2$ associated with $d_{i+1}$ is within 5 positions of the point associated with $d_{i}$, using Definition \ref{def:OrderOnP2}. 
That the two normal intervals have the same difference, $\delta = d_{i+1}-d_i$, ensures that the angle is the same. 
The edges are distinct as the $d_i$ are distinct, thus the points associated to the $d_i$ in $P_2$ are distinct too.

Thus,
\[|N(x)| \geq \sum_{j=1}^{49 K^2} \nu^2_x(\delta_j), \]
where the support of the sum is using property (2) of Claim \ref{claim:NormalIntervals}. Thus, using Cauchy-Schwarz, we have that
\begin{equation*}
    |N(x)| \geq \frac{1}{49 K^2} \left(\sum_j \nu_x(\delta_j) \right)^2 = \frac{|D_x''|^2}{49 K^2} \gg \frac{N^2}{K^2}.
\end{equation*}
The final inequality using the earlier estimate $|D_x''| = \Omega(N)$. Summing over all $x \in P_1$ and using the assumption that $|P_1| = \Omega(N)$ gives 
\begin{equation}\label{e:gr}
    |G| \gg \frac{N^3}{K^2}.
\end{equation}
It remains to prove Claim \ref{claim:NormalIntervals}.

\begin{proof}[Proof of Claim \ref{claim:NormalIntervals}]
Let $D_x = {d_1<d_2<\cdots<d_m}$, where the $d_i$ are in $(0,\pi)$ and $m =\Omega(N)$ (as guaranteed by Corollary \ref{col:ThinPluennecker}). 
We will first show that claim (1) is true.

Suppose that property (i) of normality fails for more than two thirds of the intervals $[d_i, d_{i+1})$. Each failing interval thus contains more than $CK^2$ elements of $D_x' + (D_x - D_x)$.
Using \eqref{two}, we have
\[
    \sum_{i=1}^{m} |(D_x' + (D_x - D_x)) \cap [d_i, d_{i+1})| \leq |D_x' + (D_x - D_x)| \leq 16 K^2 N.
\]
We also have the following lower bound 
\[\sum_{i=1}^{m} |(D_x' + (D_x - D_x)) \cap [d_i, d_{i+1})| \geq \frac{2|D_x'|}{3} \cdot CK^2 \geq \frac{|D_x|}{2} \cdot \frac{2}{3} \cdot C K^2 \geq \frac{C}{3} \cdot K^2 N.\]
If we take $C=(16\times 3) + 1 = 49$ we have a contradiction. So, two thirds of our intervals have property (i).

Similarly, assume that two thirds of our intervals $[d_i, d_{i+1})$ are not normal as they do not satisfy property (ii). Then 
\[
    \sum_{i=1}^{m} |D_x \cap [d_i, d_{i+1})| \leq |D_x|.
\]
We also have the following lower bound 
\[\sum_{i=1}^{m} |D_x \cap [d_i, d_{i+1})| \geq \frac{2}{3}\cdot|D_x'|\cdot 4 \geq \frac{2}{3} \cdot \frac{D_x}{2} \cdot 4 = \frac{4}{3}|D_x|,\]
which provides the sought contradiction.

As two thirds of our set satisfy properties (i) and (ii) independently, at least a third of the set must satisfy properties (i) and (ii) simultaneously. This is exactly what part (1) of the claim stated.

For property (2), note that $d_{i+1} - d_i$ is a difference of angles in $(0, \pi)$ and thus we can order the positive differences in $D_x - D_x$ using the real order. Denote the first $49 K^2 + 1$ of these by $\delta_1 < \delta_2 < \cdots < \delta_{49 K_4} <  \delta_{49 K_4 + 1}$. Let $[d_i, d_{i+1})$ be a normal interval with $\delta = d_{i+1} - d_i$. Then, it suffices to prove that $\delta < \delta_{49 K^2 + 1}$.

Suppose otherwise, that $\delta \geq \delta_{49 K^2 + 1}$. Then for all $1 \leq j \leq 49 K^2 + 1$
\[d_i + \delta_j \leq d_i + \delta = d_{i+1}.\]
So the interval $[d_i, d_{i+1})$ contains $49 K^2 + 1$ sums of the form $d_i + \delta_j$. Recalling that the $\delta_j$ are in $D_x - D_x$ shows that the normal interval contains strictly more than $49 K^2$ elements of $D_x + (D_x-D_x)$. This is a contradiction of property (i) of a normal interval.
\end{proof}

\subsection{Upper bound: Representing $|G|$ via incidences of curves and points}~

With the lower bound for the number of edges established in \eqref{e:gr} we move on to the upper bound. We prove the following proposition.
\begin{proposition}\label{prop:IncidenceBound}
Suppose, a  $N$-point set $P\subset \R^2$, satisfies the order assumption with $|P_1|,\,|P_2|=\Omega(N)$. Suppose that $P$ defines at most $KN$ distinct angles.

Let $G$ be the bipartite graph as described in Proposition \ref{prop:lowerBound}. Let $M$ be the maximum number of points of $P$ on a circle. Then at least one of the following holds
\begin{enumerate}
    \item $|G| = O\left( N^{5/2}\right).$
    \item $|G| = O\left( NM^2\right).$
\end{enumerate}
\end{proposition}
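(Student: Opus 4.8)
The plan is to realise the edges of $G$ as incidences between the points $P_1$ and a family of bounded-degree plane curves, and then run an incidence estimate whose only obstruction is a cocircular degeneracy. Writing points of the plane as complex numbers, the oriented-angle equality $\angle pxq=\angle sxt$ is equivalent to
\[
\operatorname{Im}\frac{(x-q)(x-s)}{(x-p)(x-t)}=0
\]
together with a sign condition fixing the orientation. Clearing denominators, this is a real polynomial equation in $x,\bar x$ of total degree at most three whose top-degree part is divisible by $x\bar x=|x|^2$; hence $\gamma_{p,q,s,t}:=\{x:\angle pxq=\angle sxt\}$ is a \emph{circular cubic}, in particular an algebraic curve of bounded degree. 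Since in the definition of $G$ the neighbour $q$ of $p$ and $t$ of $s$ each range over at most five choices, every edge $(x,(p,s))$ witnesses $x\in\gamma_{p,q,s,t}$ for one of at most $25$ such curves. Writing $\Gamma$ for the resulting family we have $|\Gamma|=O(N^2)$ and
\[
|G|\ \le\ 25\,I(P_1,\Gamma)+O(N^2),
\]
the $O(N^2)$ absorbing the diagonal pairs $p=s$, where Lemma~\ref{l:neighbourAssumption} forces $q=t$ and the ``curve'' degenerates to the whole plane.

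To bound the incidences I would count pairs rather than invoke a degrees-of-freedom bound, since many curves may pass through a common pair of points (only Bézout, giving $O(1)$ intersections of two distinct curves, is available). Cauchy--Schwarz over the curves gives
\[
I(P_1,\Gamma)^2\ \le\ |\Gamma|\sum_{\gamma\in\Gamma}|P_1\cap\gamma|^2\ =\ |\Gamma|\,Q,\qquad Q:=\#\{(x,x',\gamma):x,x'\in P_1\cap\gamma\}.
\]
Separating the diagonal $x=x'$, which contributes exactly $I(P_1,\Gamma)$, from the off-diagonal term $Q_{\neq}$ yields $I^2\ll N^2 I+N^2 Q_{\neq}$, and solving this quadratic gives $I\ll N^2+N\,Q_{\neq}^{1/2}$. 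Everything thus reduces to the estimate $Q_{\neq}\ll N^3+M^4$, which converts into $I\ll N^{5/2}+NM^2$ and hence, taking the larger term, into the stated alternative (1)--(2).

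This is where the dichotomy appears. The quantity $Q_{\neq}$ counts ordered tuples $(x,x',p,s)$ with $x\neq x'$ in $P_1$ and $p,s\in P_2$ for which $p,s$ (with admissible near-neighbours) subtend equal angles at \emph{both} $x$ and $x'$; equivalently it is the number of curves of $\Gamma$ through a pair $x,x'$, summed over pairs. For a fixed pair the assignment $p\mapsto(\angle pxq,\angle px'q)$ is, away from degeneracies, boundedly-to-one, so a typical pair lies on $O(N)$ curves and the generic contribution is $O(N^3)$. A single curve $\gamma_{p,q,s,t}$ meets $P_1$ in more than $O(1)$ points only if it is reducible; by the circular-cubic structure its conic component, inheriting both circular points at infinity, must be a genuine \emph{circle} $\mathcal O$, while the residual line meets $P_1$ in few points under our position hypotheses (three collinear points being excluded). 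By the converse of the inscribed-angle theorem, $\gamma$ contains such an $\mathcal O$ precisely when $p,q,s,t\in\mathcal O$ with equal chords $|pq|=|st|$, and then $P_1\cap\gamma\subset\mathcal O$ up to the line points. Hence every tuple counted beyond the generic $N^3$ is \emph{cocircular}, with $x,x',p,s$ on a common circle carrying at most $M$ points of $P$.

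The main obstacle is the final bookkeeping: showing that these cocircular tuples really total $O(M^4)$, rather than a weaker mixture of powers of $M$ and $N$. The two delicate points are (i) that richness of a curve in $P_1$ can \emph{only} arise from a circular component --- this is the step that turns convexity-plus-angles into genuine cocircularity and is the geometric heart of the argument --- and (ii) that the near-neighbour (``within $5$'') and equal-chord constraints, together with the order assumption, prevent the rich circles from proliferating, so that the count collapses onto the single parameter $M$. Controlling these degenerate curves, not the generic incidence estimate, is where the real work lies.
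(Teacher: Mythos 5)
The decisive gap is your target estimate $Q_{\neq}\ll N^3+M^4$: it is false, not merely delicate bookkeeping. There can be up to $\sim N/M$ circles each containing $\sim M$ points of $P$ (this is consistent with, indeed extremal for, the rich-curve bound of Lemma \ref{l:veryRichLines}). A single such circle is, by the inscribed-angle mechanism you describe, a component of $\sim M^2$ of the indexed curves (the choices of $(p,s)$ on it, their near-neighbours then determined by the order) and carries $\sim M^2$ ordered pairs $(x,x')$ of $P_1$-points, so it contributes $\sim M^4$ to $Q_{\neq}$; summing over the $\sim N/M$ circles gives $Q_{\neq}\sim NM^3\gg M^4$. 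Your inequality $I\ll N^2+N\,Q_{\neq}^{1/2}$ then yields only $|G|\ll N^{5/2}+N^{3/2}M^{3/2}$, and in the range $N^{2/3}\ll M\ll N$ this exceeds \emph{both} alternatives of the proposition (at $M=N^{4/5}$ one gets $N^{27/10}$ against $\max(N^{5/2},NM^2)=N^{13/5}$). The loss is intrinsic to squaring the rich-circle contribution. The repair is to excise circles with more than $\sim\sqrt N$ points \emph{before} Cauchy--Schwarz and count them linearly --- multiplicity $O(k^2)$ for a circle with $k$ points of $P_2$, times at most $k$ incidences with $P_1$, times $|\mathcal{C}_{\geq k}(P)|\ll N/k$ --- which after dyadic summation gives exactly the paper's $\sum k\cdot N\ll NM^2$; on the residual family, where every curve meets $P_1$ in $O(\sqrt N)$ points, your Cauchy--Schwarz gives $\sum_\gamma|P_1\cap\gamma|^2\le \sqrt N\, I$ and hence $I\ll N^{5/2}$ directly. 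This is in substance the paper's weighted incidence count (Lemma \ref{l:incidence}) combined with Lemma \ref{l:veryRichLines}.

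Your classification of the degenerate curves is also incomplete in two ways that matter. First, the claim that $\gamma_{pqst}$ contains a circle precisely when $p,q,s,t$ are concyclic with $|pq|=|st|$ misses Scenario R2 of Lemma \ref{l:reducible}: when $q,s$ are symmetric about the line $pt$, the cubic splits off a circle centred on that line although the four points are \emph{not} concyclic; these circles are controlled not by cocircularity of $P$ but by the multiplicity bound of Lemma \ref{l:two}. Second, you dismiss the linear components by ``three collinear points being excluded,'' but no such hypothesis is available in the proposition: the order assumption only forbids a point of $P_1$ from lying on a line through two points of $P_2$, and $P_1$ may well contain many collinear points. The paper must treat rich lines --- which Lemma \ref{l:reducible} shows are perpendicular bisectors of pairs in $P_2$, so that a single line can serve many index pairs --- via the bisector-energy bound of Lund--Petridis (Theorem \ref{thm:LundPetridis}) to keep their contribution at $O(N^{5/2})$; nothing in your argument substitutes for this. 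Relatedly, the assertion that the angle map is ``boundedly-to-one away from degeneracies'' is precisely the content of Lemmas \ref{l:reducible} and \ref{l:two}, so even your generic $O(N^3)$ term is assumed rather than proved.
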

The first part of this section is dedicated to creating and analysing a set of curves $\Gamma$, so that incidences between curves in $\Gamma$ and points in $P_1$ count the edges in $G$. Our curves can be reducible and have multiplicities, so we take care to account for this. We conclude the section by proving an incidence bound which accounts for the multiplicities of the curves.

\subsection*{Defining the Curves}~

Consider a family of at most $N^2$ real curves $\gamma_{pqst}$ which are defined by the condition that the cotangents of the angles $pxq$, where $q$ is the neighbour of $p$ in $P_2$ and $sxt$, where $t$ is the neighbour of $s$ in $P_2$ are equal. Since the angles considered are in $(0, \pi)$ this is in one-to-one correspondence with the equality of angles.

We will prove the following lemma about the curves $\gamma_{pqst}$. The proof is technical and thus delayed until Section \ref{sec:proofOfLemmata}.
\begin{restatable}{lemma}{PolyDescription}\label{l:polyDescription}
Let $p,q,s,t$ be such that $p\neq s$, $t\neq q$, $p\neq t$ and $s\neq q$. Let $c_i$ be real numbers that depend only on $p,q,s$ and $t$.
There is a non-zero polynomial
\begin{equation}\label{e:curves}
   f_{pqst}(x_1,x_2) = c_7(x_1^3+x_1x_2^2) + c_6(x_2^3+x_2x_1^2) + c_5x_1^2 + c_4x_2^2 + c_3x_1x_2 + c_2x_1 + c_1x_2 + c_0, 
\end{equation}
such that for all $x\in \R^2$, if $\angle pxq = \angle sxt$, then $x \in Z(f_{pqst})$. Furthermore, $c_7=c_6=0$ if, and only if, $p-q=s-t$.
\end{restatable}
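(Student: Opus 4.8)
The plan is to convert the equality of oriented angles into a single polynomial identity by writing each cotangent as a ratio of a quadratic to a linear form and then clearing denominators. For $p,q\in P_2$ and $x=(x_1,x_2)$ I set $N_{pq}(x)=(p-x)\cdot(q-x)$ and $L_{pq}(x)=(p-x)\times(q-x)$, where $\times$ denotes the scalar cross product $(p_1-x_1)(q_2-x_2)-(p_2-x_2)(q_1-x_1)$. A direct check gives $\cot\angle pxq=N_{pq}/L_{pq}$. Here $N_{pq}$ is the circle on diameter $pq$, hence a degree-two polynomial with leading part $x_1^2+x_2^2$, while in $L_{pq}$ the quadratic terms cancel, leaving an affine-linear form. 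Since $\cot$ is injective on $(0,\pi)$, the hypothesis $\angle pxq=\angle sxt$ forces $N_{pq}/L_{pq}=N_{st}/L_{st}$ at every point of the locus where the denominators are nonzero (a point with a vanishing denominator has angle $0$ or $\pi$ and is excluded, so the locus lies in the common-denominator region). I would therefore define
\[ f_{pqst}:=N_{pq}L_{st}-N_{st}L_{pq}, \]
a polynomial of degree at most three that vanishes on the whole locus $\{\angle pxq=\angle sxt\}$.

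Next I would read off both the shape of $f_{pqst}$ and the ``furthermore'' clause from its top-degree part. Because $N_{pq}$ and $N_{st}$ share the leading part $x_1^2+x_2^2$, the degree-three part of $f_{pqst}$ equals $(x_1^2+x_2^2)\,\ell_{pqst}$, where $\ell_{pqst}$ is the linear part of $L_{st}-L_{pq}$. Writing $p-q=(a,b)$ and $s-t=(c,d)$, one computes $\ell_{pqst}=(d-b)x_1+(a-c)x_2$, so the cubic part is governed by the two coefficients $c_7:=d-b$ and $c_6:=a-c$, while the remaining quadratic, linear and constant coefficients $c_5,\dots,c_0$ come from expanding the lower-order terms of the product; this puts $f_{pqst}$ in the asserted form. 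The ``furthermore'' clause is then immediate: $c_7=c_6=0$ precisely when $(a,b)=(c,d)$, that is $p-q=s-t$, in which case the cubic part vanishes and $f_{pqst}$ degenerates to a conic.

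The one genuinely non-trivial point, and the step I expect to be the main obstacle, is showing $f_{pqst}\not\equiv 0$ under the distinctness hypotheses. When $p-q\neq s-t$ this is free, since the cubic part $(x_1^2+x_2^2)\ell_{pqst}$ is then nonzero. The remaining (translation) case I would settle by an irreducibility argument. Suppose $f_{pqst}\equiv 0$. As $p\neq q$, the polynomial $N_{pq}$ is the circle on diameter $pq$ and is irreducible over $\R$; since $\deg L_{pq}=1<2=\deg N_{pq}$, the identity $N_{pq}L_{st}=N_{st}L_{pq}$ forces $N_{pq}\mid N_{st}$, and equality of the leading parts upgrades this to $N_{pq}=N_{st}$. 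Cancelling $N_{pq}$ in the (integral-domain) identity then yields $L_{pq}=L_{st}$. Comparing coefficients, $N_{pq}=N_{st}$ gives $p+q=s+t$ while $L_{pq}=L_{st}$ gives $p-q=s-t$; adding these forces $p=s$, contradicting $p\neq s$. Hence $f_{pqst}$ is a nonzero polynomial of the required form vanishing on the locus $\{\angle pxq=\angle sxt\}$, which is exactly the claim.
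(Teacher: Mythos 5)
Your proposal is correct and follows essentially the same route as the paper: write $\cot\angle pxq$ as the ratio of the dot product $(p-x)\cdot(q-x)$ to the wedge/cross product $(p-x)\wedge(q-x)$, use injectivity of $\cot$ on $(0,\pi)$ to reduce the angle equality to \eqref{e:wedgecurves}, clear denominators to get a cubic $f_{pqst}=N_{pq}L_{st}-N_{st}L_{pq}$, and read the ``furthermore'' clause off the top-degree coefficients. The one place you go beyond the paper's proof is the verification that $f_{pqst}\not\equiv 0$: the paper's proof of the lemma only inspects the cubic coefficients (so nonvanishing is immediate when $p-q\neq s-t$), and in the degenerate case $p-q=s-t$ it addresses nondegeneracy only informally later, via the explicit example showing $\gamma_{pqst}$ is a conic. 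Your UFD argument --- $N_{pq}$ is an honest circle of positive radius, hence irreducible, so $f_{pqst}\equiv 0$ would force $N_{pq}=N_{st}$ and then $L_{pq}=L_{st}$, whence $p+q=s+t$ and $p-q=s-t$, i.e.\ $p=s$ --- closes this cleanly and uses the hypothesis $p\neq s$ exactly where it is needed (you do implicitly use $p\neq q$ and $s\neq t$, which hold in context since $q,t$ are neighbours of $p,s$). One small discrepancy, shared with the paper itself: your computation correctly identifies the cubic part as $(x_1^2+x_2^2)\ell_{pqst}$, which contains the mixed monomials $x_1x_2^2$ and $x_1^2x_2$ with coefficients equal to $c_7$ and $c_6$; these are missing from the template \eqref{e:curves} as printed (though they reappear in the coefficient vector in the proof of Lemma \ref{l:two}). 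Since they vanish exactly when $c_7=c_6=0$, the ``if and only if'' clause is unaffected, but strictly speaking the asserted monomial form should include them.
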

As discussed below, we will take
\begin{equation}\label{e:curvesAsVarities}
    \gamma_{pqst} = Z(f_{pqst}).
\end{equation}
Lemma \ref{l:polyDescription} allows us to link edges in our graph to points in $P_1$ being incidence to curves $\gamma_{pqst}$. Indeed, let $p$ and $s$ be in $P_2$ with $q$ and $t$ the neighbours of $p$ and $s$ in $P_2$.
By definition of our graph $G$ we have an edge $(x, (p,s))\in G$ if, and only, if $\angle pxq = \angle sxt$,  
So, by Lemma \ref{l:polyDescription}, we have that
\[ (x,(p,s)) \in G \Rightarrow x \in \gamma_{pqst}. \]
Unfortunately, the curves $\gamma_{pqst}$ may have multiplicities.
To see this recall that $\mathbb{RP}^9$ is the space of bivariate real polynomial of degree at most three.
By Lemma \ref{l:polyDescription}, our curves $\gamma_{pqst}$ lie in a seven-dimensional subspace of $\mathbb{RP}^9$.
Since the family $\Gamma$ is defined by $p,q,s,t\in \R^2$ it is defined by eight real parameters, so the curves $\gamma_{pqst}$ may have multiplicities. Furthermore, if some of the polynomials happen to be reducible, they may share irreducible components. The reducibility and the multiplicity are handled in the forthcoming Lemma \ref{l:reducible} and Lemma \ref{l:two}.

To handle the multiplicities, let $\Gamma=\Gamma(P_2)$ be the multi-set of curves $\gamma_{pqst}$ described above. For a curve $\gamma\in \Gamma$, let $m_\gamma$ denote its multiplicity in $\Gamma$.
We define the weighted incidence count $|I(P,\Gamma)|$ to be
\[|I(P,\Gamma)| = \sum_{p\in P}\sum_{\gamma\in\Gamma}m_\gamma \delta_{p\in \gamma},\]
where $\delta_{p\in \gamma}$ is one if $p\in \gamma$ and zero otherwise.
Lemma \ref{l:polyDescription} thus gives us the following corollary.
\begin{corollary}\label{cor:GraphBoundByIncidence}
    Let $G$ be the graph as defined in Proposition \ref{prop:lowerBound} and let $\Gamma(P_2)$ be the set of curves as defined in \eqref{e:curvesAsVarities}. Then,
    \[ |G|\ll |I(P_1,\Gamma(P_2))|.\]
\end{corollary}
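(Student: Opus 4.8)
The plan is to exhibit an injection from the edge set of $G$, after discarding a negligible family of degenerate edges, into the set of point--curve incidences counted by $|I(P_1,\Gamma(P_2))|$, with the multiplicities $m_\gamma$ being precisely what makes this injection land in a set of size $|I(P_1,\Gamma(P_2))|$. The first step is to rewrite the weighted incidence count as a sum over four-tuples rather than over distinct varieties. Since $m_\gamma$ is by definition the number of admissible tuples $(p,q,s,t)$, with $q$ within $5$ of $p$ and $t$ within $5$ of $s$, whose polynomial cuts out the variety $\gamma$, one has
\[
|I(P_1,\Gamma(P_2))|=\sum_{x\in P_1}\#\{(p,q,s,t):\,q \text{ within }5\text{ of }p,\ t\text{ within }5\text{ of }s,\ x\in\gamma_{pqst}\}.
\]
Keeping the four-tuple as the label, rather than the variety it defines, is the key point: it is what preserves injectivity in the presence of coincident curves, and it is exactly compensated by the weighting $m_\gamma$.

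Next I would build the map. By the definition of the edge set in Proposition \ref{prop:lowerBound}, every edge $(x,(p,s))\in G$ admits a witness, namely some $q$ within $5$ of $p$ and some $t$ within $5$ of $s$ with $\angle pxq=\angle sxt$. Choosing one such witness, and assuming the non-degeneracy hypotheses $p\neq s$, $q\neq t$, $p\neq t$, $s\neq q$ of Lemma \ref{l:polyDescription}, the angle equality forces $x\in Z(f_{pqst})=\gamma_{pqst}$, so the pair $(x,(p,q,s,t))$ is one of the incidences counted above. I send the edge $(x,(p,s))$ to this tuple. The map is injective: both $x$ and $(p,s)$ can be read directly off $(x,(p,q,s,t))$, and they determine the edge, regardless of which witness was chosen. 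Hence the number of non-degenerate edges is at most $|I(P_1,\Gamma(P_2))|$, and no factor is lost to multiplicities, because the target is indexed by tuples, each counted once in $|I|$.

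The main technical point, and the principal obstacle, is to control the edges to which Lemma \ref{l:polyDescription} cannot be applied. The case $q=t$ cannot occur at an off-diagonal edge: by Lemma \ref{l:neighbourAssumption}, $\angle pxq=\angle sxq$ forces $p=s$. Thus the only degenerate edges are the diagonal ones $(x,(p,p))$ together with those every witness of which satisfies $p=t$ or $s=q$; in either of the latter two cases $p$ and $s$ lie within a bounded number of positions of one another in the common order on $P_2$. Since, by the order assumption, this notion of proximity is the same for all $x$, there are only $O(|P_2|)=O(N)$ such near-diagonal pairs $(p,s)$, and likewise $O(N)$ diagonal vertices; summing over $x\in P_1$ gives $O(N^2)$ degenerate edges in total. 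The within-$5$ relaxation only inflates the number of admissible tuples, and hence $\Gamma$, by the constant factor $25$, so combining the injection with this bound yields $|G|\ll|I(P_1,\Gamma(P_2))|+O(N^2)$. Finally the $O(N^2)$ term is harmless in the range where the corollary is used: the lower bound $|G|\gg N^3/K^2$ of Proposition \ref{prop:lowerBound} already forces $|G|\gg N^{5/2}\gg N^2$ once $K=O(N^{1/4})$, the only regime of interest, so the degenerate edges are absorbed into the implied constant and we obtain $|G|\ll|I(P_1,\Gamma(P_2))|$.
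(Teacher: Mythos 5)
Your proposal is correct and takes essentially the same route as the paper: edges are mapped to tuple-indexed weighted incidences via Lemma \ref{l:polyDescription}, with the multiset weighting $m_\gamma$ playing exactly the role you identify, and with Lemma \ref{l:neighbourAssumption} ruling out the $q=t$ degeneracy off the diagonal. The only cosmetic difference is that the paper disposes of the remaining degenerate tuples by restricting $G$ to $p\neq s$, $t\neq q$, $p\neq t$, $s\neq q$ immediately after the corollary, asserting that the lower bound \eqref{e:gr} is unaffected, whereas you make the equivalent $O(N^2)$ count explicit and absorb it in the regime $K=O(N^{1/4})$ — the same idea, spelled out slightly more carefully.
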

\subsection*{Geometry of Curves}~

We may assume, that $p\neq s$, $t\neq q$, $p\neq t$ and $s\neq q$, slightly restricting the above graph $G$, still calling it $G$. One can check that this does not affect the lower bound \eqref{e:gr} on the number of edges.

A curve $\gamma_{pqst}$ arises as follows. Since the angles considered are in $(0,\pi)$, any $x\in\gamma_{pqst}$ must lie in the intersection $Q$ of positive half-planes based on the lines $pq$, $st$ (that is, the two normals form plus ninety degrees angles with the vectors $q-p$ and $t-s$). See Figure \ref{f:PointsOnGamma}.
\begin{figure}[ht]
    \centering
    \includegraphics[scale=0.4]{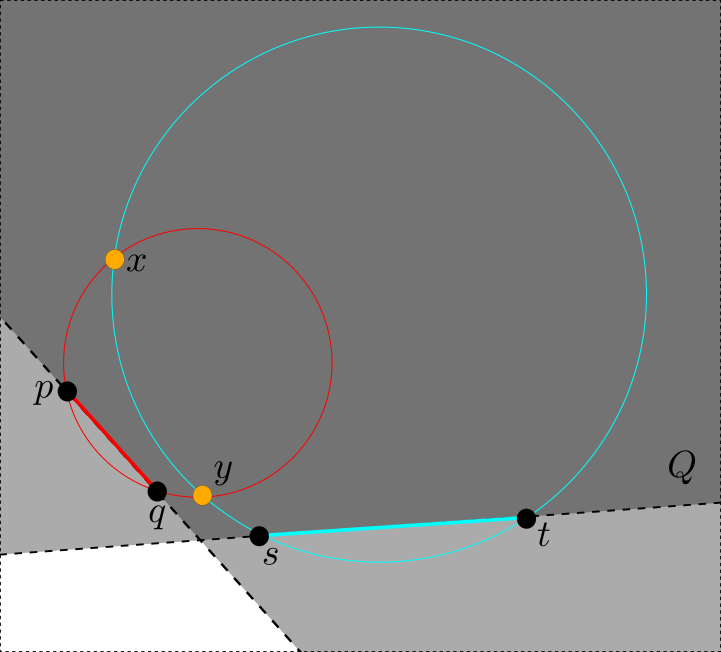}
    \caption{The points $x$ and $y$ lie on the intersection of the circles, in the the intersection of the positive half-plane $Q$ and thus on the curve $\gamma_{pqst}$.}
    \label{f:PointsOnGamma}
\end{figure}

Let us cover the positive half-plane, based on the line $pq$ by a pencil of circles, containing $pq$ as a chord. By the \emph{inscribed angle theorem}, all points in the intersection of such a circle and the same half-plane as the centre form the same angle. See Figure \ref{f:inscribedCircle}.
\begin{figure}[ht]
    \centering
    \includegraphics[scale=0.4]{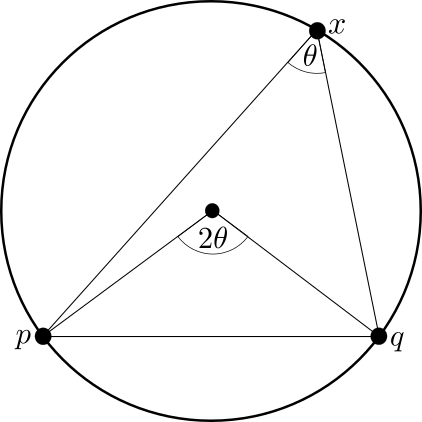}
    \caption{Any point $x$ on the circle in the positive half-plane (as drawn) forms an angle of $\theta$ when the circle has radius $R=\frac{|p-q|}{2\sin\theta}$.}
    \label{f:inscribedCircle}
\end{figure}

Each of these circles have radii $R\geq |p-q|/2$. Let us regard circles whose centres are in the positive half-plane as having positive signed radius $R$ and those centred in the negative half-plane as having negative signed radius $-R$. Now, fix $p,s$ in $P$ with $q$ and $t$ their neighbours respectively. For a given $\theta$, find $R$ so that the circle of radius $R$ through $p$ and $q$ inscribes all points which form an angle of $\theta$. There is then a $\lambda$ so that the circle of radius $\lambda R$ through $s$ and $t$ inscribes all points that from the same angle $\theta$ with $s$ and $t$. 

For a fixed angle $\theta$ the circle which represents points forming an angle $\theta$ with $p$ and $q$ has radius $R=|p-q|/2\sin\theta$. Using a similar formula for the circle through $s$ and $t$, we see that $\lambda = \frac{|s-t|}{|p-q|}$. Two such intersection points are shown in Figure \ref{f:PointsOnGamma}.

The curve $\gamma_{pqst}$ is restricted to $Q$, where it arises by intersecting, for each signed $R$ (equivalently, the angle $\theta$), the circle of radius $R$ from a pencil with the chord $pq$ with the circle from a pencil with the chord $st$ and the signed radius $\lambda R$. In particular, unless the two circles have the same radius (which requires $|p-q|=|s-t|$), their intersection occurs in at most two points, as well as if $|p-q|=|s-t|$ but the four points $p,q,s,t$ are not co-circular. The only special case is when the points $p,q,s,t$ are co-circular and the chords $pq$ and $st$ have the same length and orientation along the circle of some radius, which is uniquely defined by these $p,q,s,t$. We call this value of $R$ {\em special}, for this value of $R$ (or the corresponding inscribed angle $\theta$) alone yields the whole irreducible component of the curve $\gamma_{pqst}$. The second irreducible component is easily seen to be the symmetry axis between the points $p,t$ (and $q,s$), see the forthcoming lemmata for details.



We will replace henceforth each curve $\gamma_{pqst}$ by the whole zero set of the polynomial $f_{pqst}(x)$ in Lemma \ref{l:polyDescription}. This will imply counting the equality of oriented angles $pxq$ and $sxt$ mod $\pi$, including the zero angle.  In terms of the above geometric description, viewing $\gamma_{pqst}$ as the zero set of a polynomial means no longer confining the intersections of the pairs of circles from the two pencils, parameterised by the signed value of $R$, to the set $Q$, formed by the intersection of the two positive half-planes.

Lemma \ref{l:polyDescription} tells us that the curve $\gamma_{pqst}$ will have degree lower than three only if $p-q=s-t$. In this case one cannot have a special value of $R$ parameterising the curve $\gamma_{pqst}$, because if the parallelogram $pqts$ can be inscribed in a circle of some radius $R$, it has to be a rectangle. But then the signed values of the parameter, corresponding to the circle pencils on the chords $pq$ and $st$ will be $R$ and $-R$, owing to orientation. For an easy calculation one may fix, say $p=(1,0)$, $q=(-1,0)$ and $s=(1,c)$, $t=(-1,c)$, with some $c$. Intersecting pairs of circles of equal radii, whose centres lie on the ordinate axis and are separated by $c$ and lie both above or below the two horizontal line segments $pq, \,st$ shows that $\gamma_{pqst}$ is a hyperbola, degenerating in two mutually perpendicular lines $pt$ and $qs$ when $c=2$, that is $pqts$ is a square.

Thus, the existence of the special value of $R$ makes the cubic polynomial, whose zero set is the curve $\gamma_{pqst}$, reducible over the reals. Indeed, if $R$ is the special value of the signed parameter, the circle  with the unsigned radius $|R|$, containing the points $p,q,s,t$ is one component of $\gamma_{pqst}$, the other component being the straight line  through the centre of the circle, relative to which the segments $pq$ and $ts$ are symmetric. We call this case {\em Scenario R1}.

Is there another geometric scenario when $\gamma_{pqst}$ is described by a reducible polynomial?  It is easy to see that if the points $q,s$ are symmetric relative to the line, connecting the points $p,t$, then the latter line belongs to the zero set of  $\gamma_{pqst}$, which is therefore reducible, without the existence of a special value of $R$. We call this case {\em Scenario R2} and will show in the forthcoming lemma that {\em Scenarios R1 and R2} are the only ones that occur apropos of reducibility. Moreover, under {\em Scenario R2}, the second component of $\gamma_{pqst}$ is a circle, with a centre on the line $pt$ or a line perpendicular to the latter line if  $pt$, $qs$ are diagonals of a rhombus, in which case $\gamma_{pqst}$ has degree two, rather than three.

\begin{restatable}{lemma}{reducible}\label{l:reducible} Suppose that $P$ satisfies the order assumption, with $p,q,s,t \in P_2$.
Then, the curve $\gamma_{pqst}$ is reducible under the following two scenarios only.

R1: The segments $pq$ and $ts$ are symmetric relative to some line. Furthermore,  $\gamma_{pqst}$ is the union of this line and the circle containing the four points $p,q,s,t$ and corresponding to a special value of the angle, unless also $p-q=s-t$. In the latter case, $\gamma_{pqst}$ is quadratic and reducible as the union of two mutually perpendicular lines. See Figure \ref{f:R1}.

R2: Points $q,s$ (or $p,t$) are symmetric relative to the line $pt$ (respectively $qs$). The latter line yields one irreducible component of  $\gamma_{pqst}$. The other component is either a circle centred on the above line or the line $qs$ (respectively $pt$)  if $pt$, $qs$ are diagonals of a rhombus. In both cases the points $q,s$ belong to the latter component. See Figure \ref{f:R2}.
\end{restatable}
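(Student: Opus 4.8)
The plan is to start from the explicit cubic polynomial $f_{pqst}$ furnished by Lemma \ref{l:polyDescription}, whose zero set is $\gamma_{pqst}$, and to argue that reducibility over $\R$ forces one of the two geometric scenarios R1, R2. First I would recall the geometric construction preceding the lemma: $\gamma_{pqst}$ is traced out, as the signed radius parameter $R$ ranges over $|R|\ge|p-q|/2$, by intersecting the circle of radius $R$ on the chord $pq$ with the circle of radius $\lambda R$, $\lambda=|s-t|/|p-q|$, on the chord $st$, with both centres constrained to the appropriate half-planes. A degree-three real curve is reducible over $\R$ exactly when it splits off a rational (hence linear) factor, so $f_{pqst}=L\cdot C$, where $L$ is a real line and $C$ is a conic (possibly itself degenerate). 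The heart of the argument is to identify $L$ and $C$ geometrically from the metric data $p,q,s,t$.

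The key steps, in order, are as follows. \emph{Step 1.} Pin down what line $L$ can be. Any point $x$ on $L$ satisfies $\angle pxq=\angle sxt$ (mod $\pi$, using the zero set rather than the restricted arc $Q$), so $L$ is a locus of equal oriented angles; I would argue that such an entire line of solutions is possible only when the configuration possesses a reflective symmetry. Concretely, if the two directed segments $pq$ and $ts$ are mirror images in a line $\ell$, then reflecting $x$ across $\ell$ shows every point of $\ell$ lies on $\gamma_{pqst}$ — this is Scenario R1. Alternatively, if $q$ and $s$ are symmetric across the line $pt$, then for $x$ on that line the triangles $pxq$ and $txs$ are congruent by reflection, giving $\angle pxq=\angle txs=\angle sxt$ — this is Scenario R2 (and symmetrically with the roles of $\{p,t\}$ and $\{q,s\}$ swapped). \emph{Step 2.} Show these are the \emph{only} ways to produce a linear component, by a dimension/degree count: the residual conic $C=f_{pqst}/L$ must pass through the same base points dictated by the circle-pencil construction, and in the R1 case matching the signed radius $R$ forces the four points $p,q,s,t$ co-circular so that $C$ is exactly that circle; in the R2 case the leftover locus is a circle centred on the axis of symmetry (degenerating to the second line $qs$ precisely when $pt,qs$ are rhombus diagonals). \emph{Step 3.} Handle the degenerate sub-cases flagged in the statement: when $p-q=s-t$ the leading coefficients $c_6=c_7$ vanish (by the last clause of Lemma \ref{l:polyDescription}), the curve drops to degree two, and the R1 rectangle/square computation already carried out in the text shows $\gamma_{pqst}$ is a pair of perpendicular lines $pt,qs$.

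I would organise Step 1 so that the reflective symmetry is \emph{forced} rather than merely sufficient: assume $f_{pqst}=L\cdot C$ and read off, by comparing the construction's two circle pencils, that a whole line of equal-angle points can occur only if one pencil maps to the other under a rigid reflection, which is exactly the symmetry of the directed segments. The main obstacle, I expect, is the completeness part of Step 2 — proving that \emph{no other} reducible configuration arises. This is where the orientation bookkeeping (the \emph{signed} radius, and the distinction between segment $pq$ versus $qp$) is delicate: one must rule out spurious linear factors coming from pairs of circles that are tangent or that meet along a common radical line without the four points being genuinely symmetric. I would address this by working projectively with the seven-dimensional subspace of $\mathbb{RP}^9$ in which the $f_{pqst}$ live (noted just before the lemma), checking that the vanishing of $c_6,c_7$ and the factorisation conditions cut out precisely the symmetric loci, and by invoking the hypotheses $p\neq s$, $t\neq q$, $p\neq t$, $s\neq q$ of Lemma \ref{l:polyDescription} to exclude the trivial collapses. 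The remaining identifications of $C$ as the stated circle (or second line) are then routine consequences of the inscribed-angle description, and I would defer those computations to the full proof.
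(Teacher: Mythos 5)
Your skeleton (reducible cubic must split off a line; classify when a line can lie in $Z(f_{pqst})$; identify the residual conic) is the same as the paper's, and your sufficiency checks in Step 1 and the degenerate sub-case in Step 3 are fine. But the entire content of the lemma is the \emph{only-if} direction, and there your proposal has a genuine gap: the forcing mechanism you sketch --- ``a whole line of equal-angle points can occur only if one pencil maps to the other under a rigid reflection'' --- is false as a dichotomy. In Scenario R2 the segments $pq$ and $ts$ are in general \emph{not} mirror images of one another and the two circle pencils are not related by any reflection; the line $pt$ consists of equal-angle points only because $q$ and $s$ are reflections across it, while the pencils themselves are unrelated. Pushing your Step 1 through as framed would therefore miss R2 entirely (or wrongly collapse everything into R1). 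A second confusion: the vanishing of $c_6,c_7$ is equivalent to $p-q=s-t$, i.e.\ the \emph{degree drop} (last clause of Lemma \ref{l:polyDescription}); it says nothing about reducibility of the cubic, so ``checking that the vanishing of $c_6,c_7$ and the factorisation conditions cut out precisely the symmetric loci'' is a restatement of the problem, not an argument.

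What the paper actually does, and what your deferral does not substitute for, is a direct coefficient computation. Place the putative linear component at $x_2=0$; membership of the line in $Z(f_{pqst})$ becomes a rational identity in one variable $z$, i.e.\ a cubic in $z$ vanishing identically. The $z^3$ coefficient forces $p_2-q_2=s_2-t_2=:a$, and the remaining coefficients yield a system that factors as
\[
\bigl[\,p_1+q_1-(s_1+t_1)\,\bigr]\cdot\bigl[\,(p\wedge q)\bigl(p_1+q_1+a^{-1}\,p\wedge q\bigr)+a\,(p\cdot q)\,\bigr]=0.
\]
The first factor, after the substitution $m=p+q$, $l=p-q$, forces the segments $pq$ and $ts$ to be symmetric in the chosen line --- Scenario R1, with the $p-q=s-t$ sub-case degenerating to two perpendicular lines. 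The second factor reduces to $p_2q_2\bigl[(p_1-q_1)^2+(p_2-q_2)^2\bigr]=0$, and the ensuing case analysis has a spurious branch in which the identity forces $q=t$; this is dismissed using the order assumption via Lemma \ref{l:neighbourAssumption} (equal oriented angles $\angle pxq=\angle sxq$ force $p=s$), which is load-bearing here --- your plan to invoke only the distinctness hypotheses of Lemma \ref{l:polyDescription} does not by itself identify and kill this branch. The surviving branch is exactly R2, and the residual component is shown to be a circle centred on $pt$ by explicit substitution in \eqref{e:wedgecurves}. Without carrying out this computation (or an equivalent classification of all coefficient identities), your Steps 1--2 remain a plan rather than a proof, and the completeness claim of the lemma is unestablished.
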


\begin{figure}[ht]
    \centering
    \includegraphics[scale=0.4]{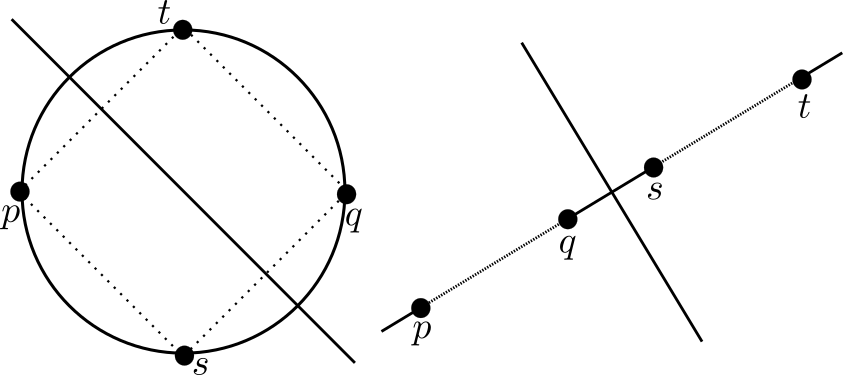}
    \caption{The two R1 cases for $\gamma_{pqst}$.}
    \label{f:R1}
\end{figure}
\begin{figure}[ht]
    \centering
    \includegraphics[scale=0.4]{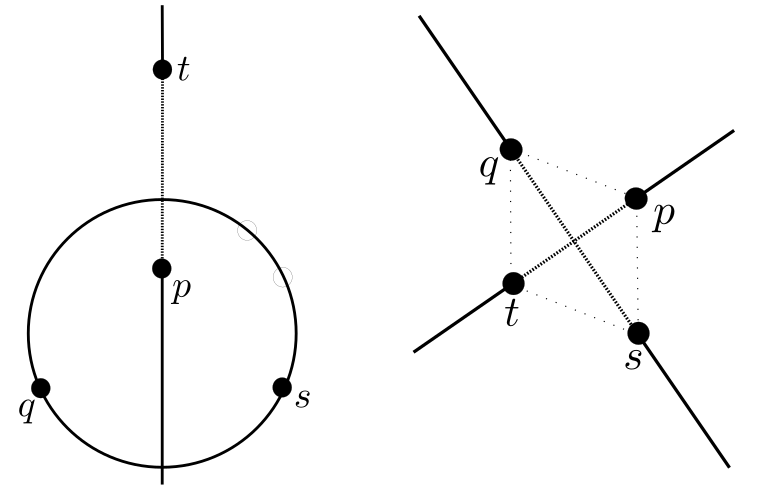}
    \caption{The two R2 cases for $\gamma_{pqst}$.}
    \label{f:R2}
\end{figure}
We present the proof of Lemma \ref{l:reducible} in a separate section along with the proof of the next lemma, which bounds multiplicity of the curves $\gamma_{pqst}$.
Indeed, there are altogether eight real parameters, coming from
$\{(p,q), (s,t)\}$, but the family of polynomial coefficients arising after rearranging equation \eqref{e:curves}  is easily seen to lie in a seven-dimensional projective space, as there are only two linearly independent coefficients over the four third degree monomials. Thus generically a one-dimensional family in the $(p,q), (s,t)$ space would yield the same curve $\gamma_{pqst}$.

\begin{restatable}{lemma}{multiplicity}\label{l:two}
For each pair of neighbours $(p,q)\in P_2\times P_2$, there are at most two pairs $(s,t)$ determining the same full curve  $\gamma_{pqst}$ (that is not just a component of in the reducible Scenario R1) or a circular component of $\gamma_{pqst}$ under Scenario R2.
 \end{restatable}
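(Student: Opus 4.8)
The plan is to fix the neighbour pair $(p,q)$ and recover $(s,t)$ from the curve by exploiting the explicit product structure behind $f_{pqst}$. Writing $u\cdot v$ for the dot product and $\det(u,v)=u_1v_2-u_2v_1$, the cotangent identity underlying Lemma~\ref{l:polyDescription} realises the defining polynomial as $f_{pqst}=AD-CB$, where $A(x)=(p-x)\cdot(q-x)$ and $B(x)=\det(p-x,q-x)$ depend only on the fixed pair $(p,q)$, while $C(x)=(s-x)\cdot(t-x)$ and $D(x)=\det(s-x,t-x)$ carry the information of $(s,t)$. Here $A$ is an irreducible real quadratic (its zero set is the circle of diameter $pq$, and its leading form $x_1^2+x_2^2$ does not factor over $\R$) and $B$ is a nonzero affine form since $p\neq q$. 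The first point I would record is that $(s,t)$ is determined by the pair $(C,D)$: the linear and constant coefficients of $C$ return $s+t$ and $s\cdot t$, those of $D$ return $s-t$ and $\det(s,t)$, and $s\pm t$ then recovers $s$ and $t$.

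The core step treats the generic full (irreducible) case. Suppose two admissible pairs $(s,t)$ and $(s',t')$ determine the same full curve; then $f_{pqst}=\lambda f_{pqs't'}$ for some $\lambda\neq0$, i.e. $A(D-\lambda D')=B(C-\lambda C')$. Since $A$ is irreducible with $B\neq0$ and $\deg B<\deg A$, we get $A\mid(C-\lambda C')$; comparing leading forms (both $C$ and $C'$ have leading form $x_1^2+x_2^2$) forces $C-\lambda C'=(1-\lambda)A$ and hence $D-\lambda D'=(1-\lambda)B$. Matching linear coefficients gives $s=\lambda s'+(1-\lambda)p$ and $t=\lambda t'+(1-\lambda)q$, and substituting into the constant-coefficient equations collapses them to $\lambda(1-\lambda)\,(s'-p)\cdot(q-t')=0$ and $\lambda(1-\lambda)\,\det(s'-p,q-t')=0$. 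For $\lambda\notin\{0,1\}$ these say $s'-p$ is simultaneously orthogonal and parallel to $q-t'$, so one of them vanishes, forcing $s'=p$ or $t'=q$, which the standing assumptions $p\neq s$ and $q\neq t$ exclude. Thus $\lambda=1$ and $(s,t)=(s',t')$: the full irreducible curve comes from a unique pair. The same divisibility argument applies verbatim in the degenerate case $p-q=s-t$, when $f_{pqst}$ drops to a conic, since the identity $A(D-\lambda D')=B(C-\lambda C')$ and the leading-form comparison are insensitive to the total degree.

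It remains to treat the reducible curves, which by Lemma~\ref{l:reducible} fall only under Scenarios R1 and R2. Under R1 the full curve is the union of the symmetry line $\ell$ of $pq$ and $ts$ with the circle through $p,q,s,t$; the full curve determines $\ell$, hence the reflection $\sigma_\ell$, hence $(s,t)=(\sigma_\ell(q),\sigma_\ell(p))$ uniquely --- consistent with counting the full curve only and leaving the R1 circle (whose multiplicity is instead governed by $M$) out of this count. Under R2 I must bound the pairs sharing a given circular component. Here $q,s$ are symmetric about the axis $\ell=pt$, and the component circle is centred on $\ell$; since $\ell$ also passes through $p$, the fixed circle forces $\ell$ to be the line through $p$ and the circle's centre, which in turn fixes $s=\sigma_\ell(q)$. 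With $\ell$ and $s$ determined, the last point $t$ ranges over the one-parameter family $t\in\ell$, and requiring the associated R2 circle to coincide with the given one is a quadratic condition on $t$; hence at most two choices of $t$, and at most two pairs.

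The hard part will be the R2 circular-component count: one has to identify the second component explicitly enough to verify that, once $\ell$ and $s$ are fixed, only a degree-two constraint on the remaining point $t$ survives, which is precisely what yields the bound of two rather than one. A secondary technical point is to confirm that for these real curves ``same curve'' is equivalent to proportionality of the defining cubics, so that the clean identity $f_{pqst}=\lambda f_{pqs't'}$ is legitimate in the full-curve case; the reducible scenarios are then dispatched geometrically, as above, rather than through proportionality.
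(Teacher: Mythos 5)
Your treatment of the full-curve case is correct and in fact sharper than the paper's. The paper fixes $p=(1,0)$, $q=(-1,0)$, writes the coefficient vector of $f_{pqst}$ as a point in a projective space parametrised by $(m,l)=(s+t,s-t)$, and bounds the fibres of this map by reducing to quadratic equations in one variable, showing that a vacuous (infinitely-solvable) equation forces the excluded outcome $(s,t)=(p,q)$; this is what produces the paper's ``at most two''. Your route via the factorisation $f_{pqst}=AD-CB$, divisibility of $C-\lambda C'$ by the irreducible circle polynomial $A$, and the collapse of the constant-coefficient equations to $\lambda(1-\lambda)(s'-p)\cdot(q-t')=0$ and $\lambda(1-\lambda)\,\det(s'-p,q-t')=0$ is valid (I checked the coefficient matching) and gives the stronger conclusion that an irreducible full curve has a \emph{unique} pre-image $(s,t)$, given the standing exclusions $p\neq s$, $q\neq t$. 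Your flagged ``secondary technical point'' is also fine but should be recorded: every component occurring here has an infinite real zero set (the cubic part of $f_{pqst}$ is $|x|^2$ times a nonzero linear form when $p-q\neq s-t$, and in the degenerate case the quadratic part is indefinite or split), so equality of real zero sets does yield proportionality by the usual $\gcd$/resultant argument, and mixed-degree or irreducible-versus-reducible coincidences are impossible.

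The genuine gap is the R2 circular-component count, which you yourself label ``the hard part'' and which is precisely where the lemma's constant two must come from. First, you treat only one of the two sub-scenarios of Lemma \ref{l:reducible}: besides $q,s$ symmetric about the line $pt$, there is the swapped configuration with $p,t$ symmetric about the line $qs$, where the linear component passes through $q$ rather than $p$; even granting your quadratic constraint on $t$, you would obtain up to two pairs from each sub-scenario, hence four rather than two. Second, the quadratic constraint itself is asserted, not proved. The paper resolves both points at a stroke by using the order assumption instead of algebra: with the circle (hence its centre $O$) and $(p,q)$ fixed, in the first sub-scenario the line is $pO$, then $s=\sigma_\ell(q)$ is determined, and then $t$ is \emph{forced}, because $t$ must be the neighbour of $s$ in $P_2$ in the sense of Definition \ref{def:OrderOnP2} --- no one-parameter family over $t\in\ell$ survives. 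This gives at most one pair per sub-scenario and two in total, matching the statement. You should replace your parameter count for $t$ by this neighbour constraint; without it the bound of two is not established (any $O(1)$ bound would still suffice for the application to Proposition \ref{prop:IncidenceBound}, but the lemma as stated would remain unproved).
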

 
The conclusion of Lemma \ref{l:two} is that the multiplicity of the full curve $\gamma_{pqst}$ cannot exceed $2N$. However, under Scenario R1, a circular component may belong to more curves. Indeed, suppose we have $M$ evenly spaced points of $P_2$ on a circle. Then the circle itself is an irreducible component of $\Omega(M^2)$ curves $\gamma_{pqst}$, where $(p,q),(s,t)$ are any two neighbouring pairs of points on the circle. 

\subsection*{Proof of Proposition \ref{prop:IncidenceBound}}~

Assuming Lemmata \ref{l:reducible} and \ref{l:two} we can, with the help of some additional results from incidence theory, conclude the proof of Theorem \ref{t:main}.
The first result concerns weighted incidences, we prove it in Section \ref{sec:proofOfLemmata}.
\begin{restatable}{lemma}{incidenceBound}\label{l:incidence}
Let $P$ be a finite set of points and $\Gamma$ a  finite multi-set set of irreducible curves where any two distinct curves in $\Gamma$ intersect at most $C$ times. For each $\gamma \in \Gamma$ let $m_\gamma \in \N$ represent the multiplicity of $\gamma$ in $\Gamma$. Let $m_{\max}$ be the largest $m_\gamma$ and let $M\in \R$ be such that
\[\sum_{\gamma \in \Gamma}m_\gamma \leq M.\]
Define the weighted number of incidences as $$|I(P,\Gamma)| := \sum_{\gamma\in \Gamma} \sum_{p\in P}m_\gamma \delta_{p\in\gamma}.$$ Then
\[|I(P,\Gamma)| \leq m_{\max}|P| + C^{1/2}M|P|^{1/2}.\]
\end{restatable}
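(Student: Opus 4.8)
The plan is to prove this by a weighted double-counting argument finished off with Cauchy--Schwarz, which is the weighted analogue of the standard K\H{o}v\'ari--S\'os--Tur\'an incidence bound for points and curves meeting few times. First I would pass to the underlying set of \emph{distinct} curves, each carrying its multiplicity $m_\gamma$, and for a curve $\gamma$ write $d(\gamma) = |\{p\in P : p\in\gamma\}|$ for the number of points of $P$ on it, so that $|I(P,\Gamma)| = \sum_\gamma m_\gamma\, d(\gamma)$. For each point $p\in P$ I set $w(p) = \sum_{\gamma\ni p} m_\gamma$, the total multiplicity of the distinct curves through $p$; then $|I(P,\Gamma)| = \sum_{p\in P} w(p)$.

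The heart of the argument is to control the second moment $\sum_p w(p)^2$. Expanding the square and splitting off the diagonal gives
\[
\sum_{p\in P} w(p)^2 = \sum_{p\in P}\sum_{\gamma\ni p} m_\gamma^2 \;+\; \sum_{p\in P}\sum_{\substack{\gamma_1,\gamma_2\ni p\\ \gamma_1\neq\gamma_2}} m_{\gamma_1} m_{\gamma_2}.
\]
The first sum equals $\sum_\gamma m_\gamma^2\, d(\gamma) \le m_{\max}\sum_\gamma m_\gamma\, d(\gamma) = m_{\max}\,|I(P,\Gamma)|$, using $m_\gamma \le m_{\max}$. For the second sum I swap the order of summation to rewrite it as $\sum_{\gamma_1\neq\gamma_2} m_{\gamma_1} m_{\gamma_2}\,|\gamma_1\cap\gamma_2\cap P|$; here the hypothesis that two distinct (irreducible) curves meet in at most $C$ points gives $|\gamma_1\cap\gamma_2\cap P|\le C$, so this term is at most $C\big(\sum_\gamma m_\gamma\big)^2 \le C M^2$. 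Combining, $\sum_p w(p)^2 \le m_{\max}\,|I(P,\Gamma)| + C M^2$.

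Finally I apply Cauchy--Schwarz in the form $|I(P,\Gamma)| = \sum_p w(p) \le \big(|P|\sum_p w(p)^2\big)^{1/2}$, which yields the quadratic inequality $|I(P,\Gamma)|^2 \le |P|\,m_{\max}\,|I(P,\Gamma)| + |P|\,C M^2$. Solving for $|I(P,\Gamma)|$ and bounding the discriminant by $\sqrt{a+b}\le\sqrt a+\sqrt b$ gives $|I(P,\Gamma)| \le m_{\max}|P| + C^{1/2} M |P|^{1/2}$, exactly as claimed.

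The step requiring the most care is the off-diagonal bookkeeping. One must genuinely separate the diagonal contribution---which is what forces the $m_{\max}|P|$ term rather than a cleaner factor---from the off-diagonal pair count, and the estimate $|\gamma_1\cap\gamma_2\cap P|\le C$ must be applied only to \emph{distinct} curves. This is precisely where irreducibility enters: without it, two curves sharing a common component could meet in infinitely many points and the pair bound would fail. Everything else is routine.
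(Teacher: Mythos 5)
Your proof is correct and is essentially identical to the paper's: both apply Cauchy--Schwarz to $\sum_p w(p)$, split the resulting second moment into the diagonal term (bounded by $m_{\max}|I(P,\Gamma)|$) and the off-diagonal term (bounded by $CM^2$ via the $C$-fold intersection hypothesis), and conclude from the quadratic inequality. The only cosmetic difference is that you solve the quadratic exactly using $\sqrt{a+b}\le\sqrt{a}+\sqrt{b}$, while the paper argues by cases according to which term dominates; your route is in fact slightly cleaner, as it recovers the stated bound with exact constants.
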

The second result is a standard bound one can find many proofs of, for example the proofs of \cite[Lemma 7.3, Lemma 10.16]{guth2016polynomial} which are stated for points but apply equally to lines.
\begin{lemma}\label{l:veryRichLines}
    Let $P \subseteq \R^n$ be a set of $N$ points. Let $\Gamma$ be a finite set of curves in $\R^n$ so that no two distinct curves in $\Gamma$ intersect more than twice. Let $\Gamma_k$ be the set of curves in $\Gamma$ which contain at least $k$ points of $P$. Then, if $k\geq 4N^{1/2}$ we have that
    \[|L_k| < \frac{2N}{k}.\]
\end{lemma}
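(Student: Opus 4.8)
The plan is to prove the bound by a greedy incidence count, adding the rich curves one at a time and charging each overlap to a pairwise intersection. Set $m := |L_k|$ and list the curves of $L_k$ in an arbitrary order $\gamma_1,\gamma_2,\ldots,\gamma_m$. The key structural input is the hypothesis that two distinct curves meet in at most two points: it gives $|\gamma_i\cap\gamma_j\cap P|\le 2$ for every $j<i$, so when I adjoin $\gamma_i$ to the union of its predecessors, the number of points of $P$ on $\gamma_i$ that have already been seen is at most $2(i-1)$. Since $\gamma_i$ carries at least $k$ points of $P$, it contributes at least $k-2(i-1)$ genuinely new points, as long as this quantity is positive.

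Summing these contributions over the first several curves lower-bounds the size of the covered subset of $P$, which is at most $N$, and a contradiction then pins down $m$. Concretely, I would suppose for contradiction that $m\ge 2N/k$ and run the count on $m':=\lceil 2N/k\rceil\le m$ of the curves. The whole role of the assumption $k\ge 4N^{1/2}$ is to guarantee $m'\le 2N/k+1\le k/8+1<k/2$, which keeps every term $k-2(i-1)$ strictly positive for $i=1,\ldots,m'$ (indeed $2(m'-1)\le k/4$). Then
\[
N \;\ge\; \sum_{i=1}^{m'}\bigl(k-2(i-1)\bigr) \;=\; m'\,(k-m'+1) \;\ge\; \frac{2N}{k}\cdot\frac{7k}{8} \;>\; N,
\]
using $m'\ge 2N/k$ and $k-m'+1\ge 7k/8$, which is the desired contradiction; hence $m<2N/k$.

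I expect the only delicate point — really bookkeeping rather than a genuine obstacle — to be checking that the greedy sum stays in its ``all terms positive'' regime, i.e.\ that $m'$ is safely below $k/2$; this is exactly where the strength of $k\ge 4N^{1/2}$ is consumed, and it accounts for the otherwise generous constant, since $2N/k$ is far from tight in this range. It is worth noting \emph{why} the cheaper approaches do not suffice: because two curves may meet twice, a pair of points need not determine a curve, so pair-counting fails outright, and triple-counting (three points do lie on at most one curve) only yields $O(N^3/k^3)$, which is weaker than $2N/k$ when $k\ge 4N^{1/2}$. Finally, I would remark that this is precisely the rich-lines argument of Guth cited in the statement, with the single modification that ``meet at most once'' is replaced by ``meet at most twice'', doubling the intersection charge from $i-1$ to $2(i-1)$.
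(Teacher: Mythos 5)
Your proof is correct and is essentially the standard argument the paper invokes by citation (the greedy rich-lines count of \cite[Lemma 7.3]{guth2016polynomial}, with the intersection charge doubled from $i-1$ to $2(i-1)$ to account for curves meeting twice); the arithmetic, including the role of $k\geq 4N^{1/2}$ in keeping all terms $k-2(i-1)$ positive and the final contradiction $N\geq 7N/4$, checks out. The paper gives no independent proof, so there is nothing further to compare.
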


We decompose our set of incidences into three cases. Incidences with degree three irreducible curves $\gamma_{pqst}$;  incidences with the circular components of $\gamma_{pqst}$; and incidences with linear components of $\gamma_{pqst}$. We use $|I(P_1,\Gamma_{\text{irr}})|, |I(P_1,\Gamma_{\text{circ}})|$ and $|I(P_1,\Gamma_{\text{lin}})|$ to represent the incidences in each of these cases respectively. Then Lemma \ref{l:reducible} gives us that
\begin{equation}\label{e:3cases}
    |I(P,\Gamma)| = |I(P,\Gamma_{\text{irr}})| + |I(P, \Gamma_{\text{circ}})| + |I(P, \Gamma_{\text{lin}})|.
\end{equation}
Using Corollary \ref{cor:GraphBoundByIncidence}, to prove Proposition \ref{prop:IncidenceBound}, it suffices to establish good bounds on each of the three weighted incidence counts above.
We will deal with each case separately.

\paragraph{\textbf{Incidences with Irreducible Curves}}
Suppose the first term of \eqref{e:3cases} dominates, that is that most of the geometric incidences with the point set $P_1$, corresponding to the edges of the graph $G$ come from irreducible curves $\gamma_{pqst}$. Then, since the multiplicity of each curve is $2N$, and as two distinct curves intersect at at most $9$ points, we can use Lemma \ref{l:incidence} to give us that
\begin{equation}\label{e:upper}
|I(P,\Gamma_{\text{irr}})| \ll N^2 +N^2\sqrt{N}.
\end{equation}
Thus, we have the first conclusion of Proposition \ref{prop:IncidenceBound}.

\paragraph{\textbf{Incidences With Circular Components.}} We deal with circles differently depending on which case of Lemma \ref{l:reducible} the circle comes from. 
We refer to the number of incidences from scenario R1 using $|I(P_1,\Gamma_{\text{circ}}^{R1})|$,
similarly $|I(P_1,\Gamma_{\text{circ}}^{R2})|$ gives the number of incidences with circles in scenario R2.

\paragraph{\textbf{Circles whose centre lies on a bisecting line.}} These are the circles from scenario R2 of Lemma \ref{l:reducible}. 
Lemma \ref{l:two} tells us that such circles have multiplicity at most $2N$. Thus, using that two circles intersect in at most 2 places, 
Lemma \ref{l:incidence} then gives us that
\[ |I(P_1,\Gamma_{\text{circ}}^{R2})| \ll N^2 +N^2\sqrt{N}.\]
Thus, we have the first conclusion of Proposition \ref{prop:IncidenceBound}.

\paragraph{\textbf{Circles which contain all four points $p,q,s$ and $t$.}} These are the circles from scenario R1 of Lemma \ref{l:reducible}. 
Once we have decided $p$ and $s$ on the circle, by our order assumption, we know $q$ and $t$. 
Thus, if we fix a circle, finding the number of ways to choose such a pair $(p,s) \in P_2^2$ on this circle will give a bound on how many $\gamma_{pqst}$ the circle could belong to. 
We now break down our circles by the number of points of $P_2$ they contain and use this to establish the bound on the weight of a fixed circle.

As there are at most $O(N^2)$ curves $\gamma_{pqst}$, the total weighted number of circles must also be $O(N^2)$. Thus, circles with at most $8N^{1/2}$ points of $P_1$ on them can contribute $O(N^{5/2})$ weighted incidence. This gives us the first conclusion of Proposition \ref{prop:IncidenceBound}.

We thus assume that all our circles contain strictly more than $8N^{1/2}$ points of $P_1$. We are now in the regime of Lemma \ref{l:veryRichLines}. The standard argument down to \eqref{eq:fat} makes rigorous the heuristics that if $M\gg\sqrt{N}$ is the maximum number of points of $P$ on a circle, then the maximum possible number of weighted multiplicities will be achieved when $P$ is evenly distributed between $N/M$ circles, when each circle may contribute up to $\sim M^3$ incidences.

Let the number of points $k$ on the circle be such that $k > 8\sqrt{N}$. At this stage we need to take some care with our point sets. Indeed, fix a circle which arises as an irreducible component in scenario R1, we want to use the number of points of $P_2$ on the circle to bound
its multiplicity and the number of points of $P_1$ on the circle to control how this multiplicity adds to our weighted incidence sum.

Define $\mathcal{C}_{=k}(P)$ to be the set of circles that arise from reducibility scenario R1 and contain exactly $k$ points in $P$,
with $\mathcal{C}_{\geq k}(P)$ defined similarly but for at least $k$ points of $P$.
Any circle $C\in \mathcal{C}_{=k}(P)$ can contain at most $k$ points in $P_2$, and therefore its multiplicity as to the Scenario R1, namely the number of pairs $(p,s)$, so that both $pq$ and $st$ are chords of $C$ (all chords having the same length), is at most $k^2$. Recall that $p$ defines $q$ and $s$ defines $t$. Clearly, we also have  $|P_1\cap C|\leq k$.

Thus, if $M$ is the maximum number of points of $P$ on a circle, we see that since 
\[
|I(P_1,\Gamma_{\text{circ}}^{R1})| \ll \sum_{k=\sqrt{N}}^M k^2 \cdot k\cdot |\mathcal{C}_{=k}(P)|.
\]
Using that $|\mathcal{C}_{=k}(P)| = |\mathcal{C}_{\geq k}(P)| -|\mathcal{C}_{\geq (k+1)}(P)|$ gives us that
\[
|I(P_1,\Gamma_{\text{circ}}^{R1})| \ll \sum_{k=\sqrt{N}}^M k^2 \cdot |\mathcal{C}_{\geq k}(P)|
\]
Using Lemma \ref{l:veryRichLines} gives us that
\begin{equation}\label{eq:fat}
|I(P_1,\Gamma_{\text{circ}}^{R1})| \ll \sum_{k=\sqrt{N}}^M k \cdot N \ll NM^2.
\end{equation}
This is the second conclusion of Proposition \ref{prop:IncidenceBound}.

\paragraph{\textbf{Incidences With Linear Components}}
Finally, we consider, and dismiss, incidences owing to straight lines. Note that Lemma \ref{l:veryRichLines} applies equally to lines as to circles. Thus, by the same argument as above, incidences supported on lines with fewer than $\sqrt{N}$ points give at most $O(N^{5/2})$ incidences.

Let $L$ the maximum number of points (of $P_1$) on a straight line and let $\Gamma_{\text{lin}, k}$ denote the linear components which intersect at least $k$ points of $P_1$. We thus estimate $|I(P_1,\Gamma_{\text{lin}})|$ using
\begin{equation}
    |I(P_1,\Gamma_{\text{lin}})| \ll N^{5/2} + \sum_{k=\sqrt{N}}^L |I(P_1, \Gamma_{\text{lin}, k})|.
\end{equation}

To estimate the latter sum we need to count the weight such a line can contribute to $|I(P_1,\Gamma)|$.
By Lemma \ref{l:reducible}, any line that forms a component of a curve in $\Gamma$ must be the perpendicular bisector of a pair $(p,t)\in P_2^2$. To deal with the these bisectors we introduce {\em bisector energy}.

Let $l$ be some line in $\Gamma_{\text{lin}, k}$ define $n(l)$ to be the number of line segments in $P_2$ where $l$ is the perpendicular bisector. If we define $B(p,t)$ to be the perpendicular bisector of $pt$ then we can define $n(l)$ as
\[ n(l) = \left|\{(p,t) \in P_2^2 : B(p,t) = l\}\right|.\]
The quantity $\sum_{l} n^2(l)$, with the summation extended to all lines, has been referred in literature as {\em bisector energy} of $P_2$. This was studied in \cite{lund2016bisector, lund2020bisectors} as well as \cite{murphy2022pinned} interpreting bisector energy via point-plane estimates. We bound this using the following theorem \cite[Theorem 3.1]{lund2020bisectors}. We adapt the statement to our setting, although for our purposes, with all the incidence estimates being dominated by \eqref{e:upper}, a trivial bound $n(l)\leq N$ would already do the job.

\begin{theorem}[Lund--Petridis (2020)]\label{thm:LundPetridis}
Let $P \subseteq \R^2$ be a finite set of size $|P| = N$. Suppose that any line or circle contains at most $M'$ points of $P$. If
\[Q_{M'} = |{(p, q, s, t) \in P^4 : B(p, t) = B(q, s)}|,\]
then
\[Q_{M'} \ll M' N^2 + \log^{1/2}(N) N^{5/2}.\]
\end{theorem}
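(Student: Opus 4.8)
The plan is to prove the bound by reducing the bisector energy to a point--plane incidence problem and feeding it into Rudnev's point--plane incidence theorem, after a dyadic decomposition by the richness of bisectors. Write $n(\ell)$ for the number of ordered pairs $(a,b)\in P^2$ with $B(a,b)=\ell$, so that $Q_{M'}=\sum_\ell n(\ell)^2$ and $\sum_\ell n(\ell)=\Theta(N^2)$. The first step is to reformulate the equal-bisector relation metrically: if $B(a,b)=B(c,d)=\ell$ then the reflection $\sigma_\ell$ sends $a\mapsto b$ and $c\mapsto d$, so $\sigma_\ell$ is an isometry carrying $\{a,c\}$ to $\{b,d\}$, forcing $|a-c|=|b-d|$ and $|a-d|=|b-c|$. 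Conversely, away from a few degenerate families (for instance the diagonal $\{c,d\}=\{a,b\}$, contributing only $O(N^2)$), these two distance equations pin down a common perpendicular bisector. Expanding and taking the sum and difference of the two squared-distance equations linearises them into the two relations $(a-b)\cdot(c+d)=|a|^2-|b|^2$ and $(a+b)\cdot(d-c)=|d|^2-|c|^2$, each bilinear in a pair of points and linear in the auxiliary vectors $c+d$ and $a+b$.

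The second step turns one linearised relation into incidences in $\R^3$: the pair $(a,b)$ determines the point $X_{ab}=(a_1-b_1,\,a_2-b_2,\,|a|^2-|b|^2)$, and the pair $(c,d)$ determines the plane $\Pi_{cd}=\{(x,y,z):(c_1+d_1)x+(c_2+d_2)y-z=0\}$, so the relation holds exactly when $X_{ab}\in\Pi_{cd}$. A naive application of the point--plane bound to the full families of $\Theta(N^2)$ points and planes yields only $Q_{M'}\ll N^3$, which is why the dyadic reduction is essential: I would instead bound, for each dyadic $k$, the number of $k$-rich bisectors, using that a $k$-rich bisector produces $\gg k^2$ quadruples and hence $\gg k^2$ incidences concentrated on a structured sub-family. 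Rudnev's theorem then gives a bound of the shape $N^{5/2}/k^2$ on the number of $k$-rich bisectors for $k$ above the threshold $\sqrt N$, while for $k\le\sqrt N$ one uses directly $\sum_{\ell:\,n(\ell)\le\sqrt N} n(\ell)^2\le \sqrt N\sum_\ell n(\ell)\ll N^{5/2}$.

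The role of the hypothesis that no line or circle carries more than $M'$ points of $P$ is to control the degenerate term in Rudnev's bound, namely the maximum number of collinear points among the $X_{ab}$. A line's worth of points $X_{ab}$ in $\R^3$ pulls back, through the paraboloid-type third coordinate $|a|^2-|b|^2$, to a cluster of points of $P$ lying on a common line or on a common circle (the circle case arising precisely from the radical-axis interpretation of the bisector), so this collinearity parameter is $O(M')$. This is exactly why the statement must forbid both rich lines and rich circles, and it is the source of the additive $M'N^2$ term. Reassembling the dyadic pieces, $Q_{M'}=\sum_\ell n(\ell)^2\approx\sum_j 2^{2j}\,\#\{\ell:n(\ell)\sim 2^j\}$, and summing the per-scale estimates with a Cauchy--Schwarz across the $O(\log N)$ scales upgrades the $\log N$ loss to $\log^{1/2}(N)$, producing the $\log^{1/2}(N)\,N^{5/2}$ main term together with the $M'N^2$ degenerate term.

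I expect the main obstacle to be the second step: arranging the incidence reduction so that the energy is genuinely controlled by $N^{5/2}$ rather than the trivial $N^3$. The difficulty is that a single bilinear relation undercounts --- it is satisfied by far more quadruples than the equal-bisector relation --- so one must simultaneously exploit the second linearised equation (equivalently, the rigidity of the reflection) to cut the point--plane family down to its rich, structured part before invoking Rudnev. Getting this bookkeeping right, and verifying that the pull-back of a collinear triple of the $X_{ab}$ really is a concyclic-or-collinear triple in the plane so that the $M'$ hypothesis applies, are the two places where the argument is most delicate.
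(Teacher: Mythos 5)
You should first note that the paper does not prove this statement at all: it is imported verbatim (modulo notation) from Lund--Petridis, \cite[Theorem 3.1]{lund2020bisectors}, so your sketch can only be measured against the known proof, and on its own terms it has a fatal gap at exactly the step you flag as ``delicate.'' Your converse claim -- that away from degenerate families of size $O(N^2)$ the two distance equations $|a-c|=|b-d|$, $|a-d|=|b-c|$ pin down a common bisector -- is false in a way that sinks the whole accounting. The distance-preserving map $a\mapsto b$, $c\mapsto d$ may be orientation-preserving: the full solution set of your two equations is $\{B(a,b)=B(c,d)\}\cup\{a+b=c+d\}$ (rotation by $\pi$ about the common midpoint), and the second branch satisfies \emph{both} of your linearised relations identically, since $(a-b)\cdot(c+d)=(a-b)\cdot(a+b)=|a|^2-|b|^2$ and likewise for the other. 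The branch $a+b=c+d$ has size equal to the additive energy of $P$, which for a $\sqrt N\times\sqrt N$ grid is $\Theta(N^3)$ while $M'\approx\sqrt N$, so it overwhelms the target $M'N^2+N^{5/2}\log^{1/2}N$. Hence no argument that dominates $Q_{M'}$ by the number of solutions of the bilinear system (or of either single equation) can work; the rotation branch must be excluded by genuinely geometric information that your point--plane encoding cannot see.

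The second concrete failure is your claim that the collinearity parameter for the lifted points $X_{ab}=(a-b,|a|^2-|b|^2)$ is $O(M')$. It is not: the pairs with a fixed difference $a-b=v$ give up to $N$ points $X_{ab}$ on a single vertical line with no constraint from $M'$; worse, all pairs with a fixed bisector $\ell=\{x:x\cdot n=c\}$ lift to the line $\R\cdot(n_1,n_2,2c)$ through the origin, and this line lies \emph{inside} every plane $\Pi_{cd}$ whose midpoint sits on $\ell$ -- so the collinear families relevant to Rudnev's degenerate term are precisely the rich bisectors themselves, of size up to $\max_\ell n(\ell)$, which can be $\sim N$ even when $M'=O(1)$ (take a mirror-symmetric set). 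This also falsifies the asserted bound of $N^{5/2}/k^2$ on the number of $k$-rich bisectors: for $N$ points on one circle with angles in arithmetic progression there are $\sim N$ bisectors with $n(\ell)\sim N$, so an $M'N^2/k^2$ term is unavoidable, and your mechanism never produces it. In the actual Lund--Petridis argument the hypothesis on rich lines and circles enters through the geometric fact that any quadruple with $B(a,b)=B(c,d)$ is concyclic or collinear (the four points form an isosceles trapezoid, a cyclic quadrilateral), which is exactly the information that separates the reflection branch from the rotation branch; that input is invisible to the single incidence relation $X_{ab}\in\Pi_{cd}$, and restoring it is not bookkeeping but the substance of the proof.
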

Thus, such a line $l$ may contribute to $|I(P_1,\Gamma_{\text{lin}})|$ the quantity $n(l)k$.
Summing over lines in $\Gamma_{\text{lin}, k}$ and using Cauchy-Schwarz, we see that
\begin{equation*}
|I(P_1, \Gamma_{\text{lin}, k})| =
\sum_{l \in \Gamma_{\text{lin}, k}}n(l)k
\leq k\left(\frac{N}{k} \sum_{l\in \Gamma_{\text{lin}, k}} n^2(l)\right)^{1/2}.
\end{equation*}
The quantity $\sum_{l} n^2(l),$ with the summation, extended to all lines, is exactly the {\em bisector energy} of $P_2$, called $Q_{M'}$ in Theorem \ref{thm:LundPetridis}. We thus have the bound
\[
\sum_{l} n^2(l) \ll N^{5/2}\log^{1/2} N + M'N^2\,
\]
where $M'$ is the maximum number of points (of $P_2$) on a circle or line. Combining with the above, we have that
\begin{equation}\label{eq:krichLinesEstimate}
|I(P_1, \Gamma_{\text{lin}, k})| \ll \left(N^{7/2}\log^{1/2} N + M'N^3\right)^{1/2} \cdot k^{1/2}.
\end{equation}

We split up the second sum into dyadic intervals and apply \eqref{eq:krichLinesEstimate} to obtain
\begin{align*}
    \sum_{k=\sqrt{N}}^L |I(P_1, \Gamma_{\text{lin}, k})| &\ll \left(N^{7/2}\log^{1/2} N + M'N^{3/2}\right)^{1/2}\sum_{i=(1/2)\log N}^{\log L}2^{i/2}\\
    &\ll L^{1/2} N^{7/4} \log^{1/4} N + \sqrt{LM'} N^{3/2}.
\end{align*}

As, $L\leq N$ and $M'\leq N$, both terms here are $O(N^{5/2})$. This gives us conclusion (1) of Proposition \ref{prop:IncidenceBound}.

This concludes all cases and so we have established Proposition \ref{prop:IncidenceBound}.\hspace{4.55cm} $\Box$

\subsection*{Conclusion of the proof of Theorem \ref{t:main}}~

To finish the proof of Theorem \ref{t:main} we bring together Proposition \ref{prop:lowerBound} and Proposition \ref{prop:IncidenceBound}. We start by assuming that the first conclusion of Proposition \ref{prop:IncidenceBound} holds i.e. $|G|\ll N^{5/2}$. We then use the lower bound proved from Proposition \ref{prop:lowerBound}, giving us that
\[ \frac{N^3}{K^2} \ll |G| \ll N^{5/2},\]
and thus
\[ N^{1/4} \ll K.\]
If the second conclusion holds then, with $M$ the maximum number of points of $P$ on a circle, we have that $|G| \ll NM^2$.
We then use the lower bound proved from Proposition \ref{prop:lowerBound}, giving us that
\[ \frac{N^3}{K^2} \ll |G| \ll NM^2,\]
and thus
\[\frac{N}{K} \ll M.\]
This concludes the proof of Theorem \ref{t:main}.
\hspace{82mm} $\Box$

\section{Proof of Theorem \ref{t:unc}} The proof has two main steps. The first step is to show that if all of $P$, {\em but for one point} were to lie on the union of a circle and its centre, $P$ would necessarily yield $\Omega(N^{13/10-\epsilon})$ distinct angles. This is done by reducing the issue to an application  of Theorem \ref{th:conv}.

The second step is a combinatorial argument that analysing the side of the alternative, presented by Theorem \ref{t:main}, when $P$ has many points on a circle, we may effectively assume that this circle contains a positive proportion of $P$, for otherwise one can refine the incidence graph $G$ underlying the proof of Theorem  \ref{t:main} and get a better estimate.

\subsection{Invoking Theorem \ref{th:conv}}~
Consider some fixed point $p_a= (\cos a,\sin a)$ for $t=a$, where $t$ is the variable on the unit circle. Let $(x,0)$ be a point on the abscissa axis, with $x\geq 0$. Let $P$ be the union of some set $A$ of points on the circle with $\{x\}$.
Our goal is to establish that in order to justify the estimate of the corollary it suffices to consider the union of all angles with the vertices at $x$ or $a$ only. Observe that, say if $a_1,a_2\in A$, then the angle $a_1xa_2$ is the difference of the angles formed by the slopes of the lines, connecting $a_1$ and $a_2$ with $x$.

Without loss of generality we may assume that all points of $A$ lie in the positive quadrant.

Let
$$
\alpha = \arctan\left(\frac{\sin t-\sin a}{\cos t - \cos a}\right)
$$
be the angle of the slope of the line, connecting points $p_t$ and $p_a$ on the unit circle at the angles $t$ and $a$. From elementary geometry, looking at the isosceles triangle $p_a0p_t$ (where $0$ is the origin) $d\alpha/dt=\frac{1}{2}$.

Now consider the angle of the slope of the line, connecting $p_t$ and $(x,0)$.

This angle is
$$
\beta =  \arctan\left( \frac{\sin t}{\cos t-x}\right)\,.
$$

\begin{center}
    \begin{figure}[ht]
        \centering
        \includegraphics[scale=0.5]{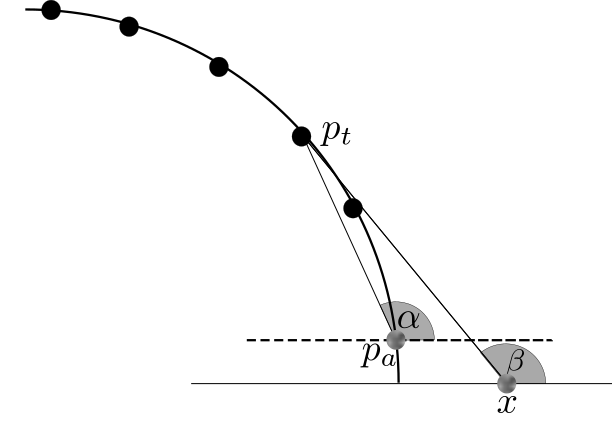}
        \caption{Angle $\alpha$ is a strictly convex function of $\beta$. So, the number of angles at either $p_a$ or $x$ must grow.}
        \label{f:PointsNotAllOnACircle}
    \end{figure}
\end{center}
 
Let $t=t(\beta)$,  implicit differentiation yields
$$
1 = \frac{1-x\cos t}{x^2-2x\cos t + 1}\frac{dt}{d\beta}.
$$
Thus,
$$
\frac{d\alpha}{d\beta} = \frac{1}{2} \frac{x^2-2x\cos t + 1}{1-x\cos t} =  1 +  \frac{x^2-1}{2(1-x\cos t)}\,.
$$
Differentiating once more
$$
\frac{d^2\alpha}{d\beta^2} = - \frac{1}{2} x(x^2-1) \frac{(x^2-2x\cos t + 1)\sin t} {(1-x\cos t)^3}\,.
$$
Since we assume that $t\in (0,\pi/2)$, for nonnegative $x\neq 0,1$,  the second derivative
$\frac{d^2\alpha}{d\beta^2} $ is sign-definite. 

We can now apply Theorem \ref{th:conv}, with $B$ being the set of slopes $\beta$ of the lines, connecting a point in the set $A$ on the circle with $(0,x)$ and $f(B)$ the function $\alpha(\beta)$.
It follows that if $A$ is now a point set in any circle, $a\in A$ and $x$ is any point in the plane which is neither on the circle nor its centre, then the union, over $a_1,a_2\in A$, of the sets of all angles $a_1aa_2$ and $a_1xa_2$ has cardinality $\Omega\left( |A|^{13/10-\epsilon}\right)$. We repeat that for our purposes we can use the weaker bound $\Omega\left( |B|^{5/4}\right)$ by Elekes et al. \cite{ENR}. Although this is irrelevant for the proof of our main result in view of assumption on $P$ (namely that both sets $P_1$ and $P_2$ have cardinality $\Omega(N)$), we remark that the latter estimate applies, after a similar calculation, to the number of angles with the bases in some set $A$ on a straight line and vertices at two points $x_1,x_2$, which are not symmetric relative to the line. For generalizations of local convexity estimates, see a recent paper by Mansfield \cite{sam_conv}.

\label{sub}

\subsection{Conclusion of the proof of Theorem \ref{t:unc}}~
We start with an upshot of the forthcoming argument. In the order assumption one may assume without loss of generality that the sets $P_1$ and $P_2$ are disjoint (by worsening the absolute constants hidden in the $O,\Omega$ symbols). Let us call a circle {\em rich} if it supports more than $N^{3/4}$ points of $P_2$; the number of rich cirlcles  by the standard Cauchy-Schwarz argument of Lemma \ref{l:veryRichLines}, is $O(N^{1/4})$.

Recall the upper bound in the case of R1 reducibility scenario from  
the proof of Theorem \ref{t:main}, with $M$ being the maximum number of points of $P$ on a circle:
\begin{equation}\label{eq:bad} |I(P_1, \Gamma_{\text{circ}}^{\text{R1}})| \ll \sum_{C\in \mathcal C} |P_2\cap C|^2|P_1\cap C|\leq NM^2,\end{equation}
where $\mathcal C$ is a collection of circles each supporting $\gg\sqrt{N}$ points of $P$. This was {\em the only} case, which could possibly dominate the $O(N^{5/2})$ bound on the number of edges in the graph $G$, the latter bound in particular controlling the reducibility scerario R2.

Let $P_2'$ be the part of $P_2$ supported on the union of rich circles. The claim of Theorem \ref{t:unc} follows from  
the proof of Theorem \ref{t:main} immediately  if either $P_2\setminus P_2' =\Omega(N)$ (by redefining $P_2$ as $P_2\setminus P_2'$) or if $\Omega(N)$ elements of $P_1$ are supported outside the union of rich circles (by redefinig $P_1$ as the set of the latter $\Omega(N)$ elements).

We therefore assume that $P_2=P_2'$ and that all of of $P_1$ lies on the union of rich circles. Observe that
we are also done if there is a circle with, say $\gg N^{.99}$ points of $P_2$ after an application of Theorem \ref{th:conv} as described in the previous subsection.
Then the following lemma, whose proof is given in the next section, enables one to close the argument.
\begin{restatable}{lemma}{Pups}\label{lem:denseOnCircles}
Suppose that we have the union of two disjoint sets $P_1$ and $P_2$ in $\mathbb{R}^2$, supported on a finite set of circles $\mathcal{C}$ in $\mathbb{R}^2$. Then one of the two statements is true:
\begin{itemize}
    \item There are disjoint subsets $\mathcal{C}_1$ and $\mathcal{C}_2$ of $\mathcal{C}$ so that $\mathcal{C}_i$ supports a positive proportion of $P_i$.
    \item There is a  circle $C\in\mathcal{C}$ that supports a positive proportion of both $P_1$ and $P_2$.
\end{itemize}
\end{restatable}
If the second conclusion of Lemma \ref{lem:denseOnCircles} holds, Theorem \ref{t:unc} follows by the conclusion of Section \ref{sub}.
If the first conclusion of Lemma \ref{lem:denseOnCircles} holds, note that since $|\mathcal C|\ll N^{1/4}$, once can throw away from both sets $P_1$ and $P_2$, of cardinality $\Omega(N)$, points lying at the union of pairwise intersections of circles from $\mathcal{C}_1$ and $\mathcal{C}_2$, whereupon Theorem \ref{t:unc} follows after redefining $P_i$ as $P_i\cap \mathcal C_i$ and the observations earlier.

 $\Box$

\section{Proof of Lemmata}\label{sec:proofOfLemmata}
Throughout this section we will need the wedge product. For two vectors $a=(a_1,a_2)$ and $b = (b_1,b_2)$ both in $\R^2$ we define this as
\begin{equation*}
    a \wedge b = a_1b_2 - a_2b_1.
\end{equation*}
Let $\theta$ be the angle between $a$ and $b$, we will need the following properties of the wedge product.
\begin{itemize}
    \item $a \wedge b = -b \wedge a$.
    \item $a \wedge (b \wedge c) = (a \wedge b) \wedge c$.
    \item $a \wedge a = 0$.
    \item $|a \wedge b| = |a||b|\sin(\theta)$.
\end{itemize}
We recall Lemma \ref{l:neighbourAssumption}.
\neighbourAssumption*
\begin{proof}
    Assume that $p\neq s$. As the orientated angles $\angle pxq = \angle sxq$, we must have that the distinct points $p$ and $s$ lie on the same half-line with end-point $x$.
    This contradicts our order assumption as $x$ is a point of $P_1$ that lies on a line containing the two points $p$ and $s$ in $P_2$.
\end{proof}

Recall Lemma \ref{l:polyDescription}.
\PolyDescription*
\begin{proof}[Proof of Lemma \ref{l:polyDescription}]~

Suppose that $p,q,s$ and $t$ are any points in $P_2$ (at this stage they need not be close in the sense of Definition \ref{def:OrderOnP2}). We want to find the locus of points $x$ so that $\angle pxq = \angle sxt$. If these angles do agree then so must their cotangents. Treating points in $P$ as vectors in $\R^2$ and using the dot-product formula give us
\begin{equation}\label{eq:dotProd}
   \cos(\angle pxq) = \frac{(p-x)\cdot(q-x)}{|p-x||q-x|}.  
\end{equation}
Using the final property of the wedge product stated above, we have that
\begin{equation}\label{eq:wedgeProd}
    \sin(\angle pxq) = \frac{|(p-x)\wedge(q-x)|}{|p-x||q-x|} = \frac{(p-x)\wedge(q-x)}{|p-x||q-x|}.
\end{equation}
The final equality uses that our angles are in $(0,\pi)$ and thus sine is strictly positive. We can thus divide \eqref{eq:dotProd} by \eqref{eq:wedgeProd} to give us
\begin{equation}
    \cot(\angle pxq) = \frac{(p-x)\cdot(q-x)}{(p-x)\wedge(q-x)}.
\end{equation}
Setting $\cot(\angle pxq) = \cot(\angle sxt)$ and using the above rules of the wedge product to expand the denominator, where we assume that $(p,q)\neq (s,t)$, gives the equation
\begin{equation}\label{e:wedgecurves}
\frac{|x|^2 - x\cdot (p+q) + p\cdot  q}{p\wedge q - (p-q)\wedge x} = \frac{|x|^2 - x\cdot (s+t) + s\cdot  t}{s\wedge t - (s-t)\wedge x}.
\end{equation}
Fix $p,q,s,t$ and let $x=(x_1,x_2)$. We want to understand \eqref{e:curves} as a real polynomial with the variables $x_1$ and $x_2$. 

If rearrange equation \eqref{e:wedgecurves} by multiplying up the denominators it is easier to see that one has a degree 3 curve. The left-hand side of the rearranged formula \eqref{e:wedgecurves} becomes
\begin{equation*}
    (s\wedge t)(x_1^2 + x_2^2 - x_1(p+q)_1 - x_2(p+q)_2 + p\cdot q) + (x_1(s-t)_2 - x_2(s-t)_1)(x_1^2 + x_2^2 - x_1(p+q)_1 - x_2(p+q)_2 + p\cdot q)\,
\end{equation*}
where, say  $(p+q)_1$ is used to denote the first component of $p+q$. On the ohter side of the equality we have the same expression with the ordered pairs $(p,q)$ and $(s,t)$  swapped.

In particular, the left-hand side is
\begin{align*}
    &{\color{white}+~} (x_1^3+x_1x_2^2)(s-t)_2 - (x_2^3+x_2x_1^2)(s-t)_1 \\
    &+ x_1^2\left(s\wedge t - (s-t)_2(p+q)_1\right)\\
    &+ x_2^2\left(s\wedge t + (s-t)_1(p+q)_2\right)\\
    &+ x_1x_2\left( (p+q)_1(s-t)_1 - (p+q)_2(s-t)_2\right)\\
    &+ x_1\left( (s-t)_2(p\cdot q) - (s\wedge t)(p+q)_1 \right)\\
    &+ x_2\left( -(s-t)_1(p\cdot q) - (s\wedge t)(p+q)_2 \right)\\
    &+ 1 \cdot(p \cdot q)(s\wedge t).
\end{align*}
Thus, if we rearrange \eqref{e:wedgecurves} by multiplying up the denominators and moving all terms to the right-hand side we are left with a degree three polynomial in $x=(x_1,x_2)$. Call this resulting polynomial $f_{pqst}$. By construction, any point $x$ where $\angle pxq =\angle sxt$ will ensure that the two sides of \eqref{e:wedgecurves} are equal and thus $f_{pqst}(x)=0.$

Inspecting the coefficients of $x_1^3$ and $x_2^3$ above, one can see that $c_7=c_6=0$ if, and only if $p-q=s-t$.
\end{proof}

Recall Lemma \ref{l:reducible}.
\reducible*

\begin{proof}[Proof of Lemma \ref{l:reducible}]
If the curve $\gamma_{pqst}$ is reducible, as it had degree at most 3 it must contain a line. 

Without loss of generality, the line is $x_2=0$, and denoting $x_1=z$ we have that for all $z\in \R$,
\begin{equation}
\frac{z^2 - z(p_1+q_1) + p\cdot q }{z^2 - z (s_1+t_1) + s\cdot t} = 
\frac{z(p_2-q_2) + p\wedge q}{z(s_2-t_2) + s\wedge t}\,.
\label{e:pols}
\end{equation}
We assume that $p,q,s,t$ are all distinct.

The proof proceeds by rearranging the above as a cubic polynomial in $z$, which must be identically zero. This implies, comparing the coefficients at $z^3$ that
\begin{equation}s_2-t_2 = p_2-q_2:=a\,.\label{eq:a}\end{equation}
Suppose, $a=0$.  Then in \eqref{e:pols} either we have a constant $\frac{ p\wedge q}{ s\wedge t}$ in the right-hand side or $p\wedge q = s\wedge t =0$. In the former case, looking at $z^2$ terms, it follows that $\frac{ p\wedge q}{ s\wedge t}=1$. In both cases we can jump to the forthcoming conclusion \eqref{e:eq} and proceed thence to Scenario R1.

Assuming $a\neq 0$, we
proceed with coefficients at lower powers $z$ in the rearrangement of \eqref{e:pols}. This yields
\begin{equation}\label{e:coeff}\begin{aligned}
s\wedge t - p\wedge q &= a[  (p_1+q_1) - (s_1+t_1)],\\
a(p\cdot q - s\cdot t) &= (p_1+q_1)s\wedge t - (s_1+t_1)p\wedge q,\\
(p\wedge q)(s\cdot t) &= (s\wedge t)(p\cdot q) .
\end{aligned}
\end{equation}
Eliminate  $s\wedge t$  and $s\cdot t$ from the first two equations:
$$
\begin{aligned}
s\wedge t  & = p\wedge q + a[  p_1+q_1 - (s_1+t_1)],\\
 s\cdot t & = p\cdot q -a^{-1}[(p_1+q_1)( p\wedge q + a[  p_1+q_1 - (s_1+t_1)]) - (s_1+t_1)p\wedge q] \\
 &= p\cdot q - [  p_1+q_1 - (s_1+t_1)](p_1+q_1+a^{-1}p\wedge q) \,.
 \end{aligned}
$$
Substituting into the third equation yields
\begin{equation}\label{eq:prod}
[  p_1+q_1 - (s_1+t_1)] \cdot [ (p\wedge q) (p_1+q_1+a^{-1}p\wedge q)  + a (p\cdot q)] \;=\;0\,.
\end{equation}
If the first factor in square brackets is zero, it means 
$p_1+q_1= s_1+t_1$.
and further, from the first two equations of  \eqref{e:coeff}, $s\wedge t=p\wedge q$ and  $s\cdot t=p\cdot q$. The same, as we mentioned, is true when $a=p_2-q_2=s_2-t_2=0$.

To conclude the description of this scenario, summing up, we have
\begin{equation}\label{e:eq}
p_2-q_2=s_2-t_2,\qquad p_1+q_1 = s_1+t_1,\qquad p\wedge q = s\wedge t,\qquad p\cdot q = s\cdot t\,.
\end{equation}

Changing variables to $p+q=m,\;s+t=m'$, $p-q=l,\;s-t=l'$ transforms it to
$$
m_1=m_1',\;\;l_2=l_2', \;\; m_2l_1 = m_2'l_1',\;\; m_2^2-l_1^2 = m_2'^2 - l_1'^2\,.
$$ The solutions of the last two equations are either $m_2=m_2'$ and $l_1=l_1'$, to be dismissed as this means $(s,t)=(p,q)$, or $m_2=-m_2'$ and $l_1=-l_1'$.

This means, the segments $pq$ and $ts$ are symmetric relative to the $x_1$-axis. Thus, unless  both segments are vertical, the points $p,q,s,t$ are co-circular. Otherwise, as then $p-q=s-t$, the curve $\gamma_{pqst}$ is quadratic and reducible further as the union of two mutually perpendicular lines. This is the content of Scenario R1.

\medskip 
The alternative scenario arises when  the second term in square brackets in \eqref{eq:prod} is zero. Recalling that $a=p_2-q_2$ and $p\wedge q = p_1q_2-q_1p_2$, we have that
\begin{align*}
    (p\wedge q)(p_1+q_1+a^{-1}(p\wedge q)) + a(p\cdot q) &= 0\,\\
    (p\wedge q)(ap_1 + aq_1 + (p\wedge q)) + a^2(p\cdot q) &= 0\,\\
    (p\wedge q)(p_1p_2-q_1q_2) + (p_2-q_2)^2(p\cdot q) &= 0.
\end{align*}
After opening brackets and rearranging this becomes
\begin{equation}\label{e:deg}
p_2q_2[(p_1-q_1)^2 + (p_2-q_2)^2]=0\,.
\end{equation}
Recall that $p\neq q$, and the case $p_2=q_2=0$ has already been considered.

Thus, it remains to consider the case when \eqref{e:deg} holds with only one of $p_2,q_2$ being zero. Observe, that from symmetry we must have condition \eqref{e:deg} satisfied by $(s,t)$ replacing $(p,q)$ as well, hence with only one of $s_2,t_2$ being zero.

Then it suffices to proceed under assumption that  $p_2=0$ and $a=-q_2\neq 0$. The case $q_2=0$, rather than $p_2$, follows from symmetry of \eqref{e:pols}, relative to transposing simultaneously  within the pairs $(p,q)$ and $(s,t)$. 

Thus assume that that  $p_2=0$, $a=-q_2\neq 0$ and either (i) $s_2=0$ and $q_2=t_2\neq 0$, or (ii) $t_2=0$ and $s_2=-q_2\neq 0$. 
In the former case (i) continuing from \eqref{e:pols} gives us that
\begin{align*}
    \frac{z^2 - z(p_1+q_1) + p\cdot q }{z^2 - z (s_1+t_1) + s\cdot t} &= 
\frac{z(p_2-q_2) + p\wedge q}{z(s_2-t_2) + s\wedge t}\,\\
    \frac{z^2 + z(p_1+q_1) + p_1q_1}{z^2 + z(s_1+t_1) + s_1t_1} &= \frac{az-ap_1}{az-as_1}\,\\
    \frac{(z-p_1)(z-q_1)}{(z-s_1)(z-t_1)} &= \frac{z-p_1}{z - s_1}.
\end{align*}
This means $q=t$ and therefore, as $P$ satisfies the order assumption, Lemma \ref{l:neighbourAssumption} tells us $s=p$. Hence, this case gets dismissed.
In the latter case (ii) it follows from \eqref{e:pols} that
$$\frac{(z-p_1)(z-q_1)}{(z-s_1)(z-t_1)} = \frac{z-p_1}{z - t_1}\,,$$
Thus, the points $(p,t)$ are on the abscissa axis, $q_1=s_1$ and since $s_2=-q_2$, the points $q,s$ are symmetric with respect to the abscissa axis. This is Scenario R2.

At this point also observe that if $pqts$ it a rhombus, then Scenario R2 takes place simultaneously if we swap $(p,t)$ and $(s,q)$ around, so the other diagonal of the rhombus is also a component of the curve $\gamma_{pqst}$. Moreover, then $p-q=s-t$, so by \eqref{e:wedgecurves} the degree of the curve $\gamma_{pqst}$ is at most two, so there are no other components.

It remains to show that otherwise in Scenario R2, the second component of $\gamma_{pqst}$ is a circle. This is a calculation, once we return to \eqref{e:wedgecurves}, where we can set $t=0$, $p=(p_1,0)$, $q=(q_1,q_2)$, $s=(p_1,-q_2)$. Substituting in \eqref{e:wedgecurves} and rearranging yields
$$
x_2 [ (p_1-2q_1)(x_1^2+x_2^2) + 2(q_1^2+q_2^2)x_1 - p_1(q_1^2+q_2^2) ] =0,
$$
and it's easy to see that $x = q,s$ satisfies this equation.

One component of  $\gamma_{pqst}$ is the $x_1$-axis. If $p_1-2q_1=0$, $pqts$ is a rhombus. Otherwise
$$
x_2 \left[ x_2^2+ \left(x_1 - \frac{q_1^2+q_2^2}{p_1-2q_1}\right)^2  - \frac{(p_1-q_1)^2 + q_2^2}{(p_1-2q_1)^2}(q_1^2+q_2^2)\right] =0,
$$
and inside the square brackets we have an equation of a circle.

Observe that  by translation invariance, that changing $t=(0,0)$ to $t=(t_1,0)$ would merely effect the change $p_1\to p_1-t_1,\;q_1\to q_1-t_1,\;x_1\to x_1-t_1$ in the latter expression.

This completes the proof of the lemma.
\end{proof}

Recall Lemma \ref{l:two}.
\multiplicity*

 \begin{proof}[Proof of Lemma \ref{l:two}]  Without loss of generality we set $p=(1,0),\, q=(-1,0)$, so that the equation \eqref{e:wedgecurves} of the curve $\gamma_{pqst}$ becomes, with the notation $m=s+t,\,l=s-t$,
 $$
 \frac{|x|^2-1}{ 2x_2} = \frac{|x|^2 - x\cdot (s+t) + s\cdot  t}{s\wedge t - (s-t)\wedge x} = 
 \frac{|x|^2 - x_1m_1-x_2m_2 + \frac{|m|^2-|l|^2}{4}} {\frac{m\wedge l}{2} + l_1 x_2 - l_2x_1} \,.
 $$
 This yields a cubic (or lower degree) polynomial in $x$, with the projective vector of coefficients in the basis monomials $(x_1^3,x_1^2x_2,x_2^2x_1,x_2^3,x_1^2,x_1x_2,x_2^2, x_1, x_2,1)$ being
 $$
 \left(-l_2:l_1-2:-l_2:l_1-2:\frac{m\wedge l}{2}:2m_1:\frac{m\wedge l}{2}+2m_2:l_2:-l_1-\frac{|m|^2-|l|^2}{2}:-\frac{m\wedge l}{2}\right)\,.
 $$
 Thus, scaling some of the monomials for convenience, we have a map from $(m,l)$ into the five-dimensional projective subspace, as follows:
 $$ (m,l)\to (l_2:l_1-2:{m\wedge l}:m_1:{m\wedge l}+4m_2:2l_1+|m|^2-|l|^2)\,.$$ Let us show that it is not possible that an image has more than two pre-images $(s,t)$. The calculations below end up with solving quadratic equations in one variable $z$: either there are at most two roots or the number of solutions is infinite when the equation is vacuous. We will show that assuming the latter leads to a single trivial outcome $(s,t)=(p,q)$, which is a contradiction.
 
 First set $z=l_2$, assuming $z\neq 0$, when the  image vector equals $(1:a:b:c:d:e)$. Let us find conditions on $(a,b,c,d,e)$, which have an infinite number of pre-images $(m,l)$.
 We have
 $$
 l_1-2=az,\;\; {m\wedge l}=bz, \;\;m_1=cz,\;\;4m_2+ bz=dz,\;\;2az+4+|m|^2-|l|^2 = ez\,.
 $$
The second equation says
 $$
 bz = cz^2-(d-b)(az+2)z/4\,.
 $$
  This equation in $z$ has infinitely many (more than two) solutions only if 
 $$
 d= -b,\;\;c=-ab/2\,, d-b = -2b\,.
 $$
 So $m_1 = (-ab/2)z$, $m\wedge l = bz$, $m_2= -bz/2$.
 
For the last equation to have infinitely many solutions, 
the quadratic term in $z$ must vanish, thus
$$
(ab)^2/4+ b^2/4 = 1 + a^2\,.
$$
Therefore, $b=\pm 2$.

Taking $b=2$ we have $m=(-az,-z),\;l=(az+2,z)$. Hence, given $a\neq 0$, $s=(1,0)$ and $t=(-1-az,-z)$. For different values of $z$ the points $t(z)$ are clearly on a line. But $s=p$, and $p$ may have only one neighbour in $P_2$,  that is $q$.  Thus, the only option is $(s,t)=(p,q)$, a contradiction.

Taking $b=-2$, we have $m=(az,z)$, $l=(az+2,z)$, so $s=(1+az,z)$,  $t=(-1,0)$. Thus now $t=q$,  which yields the same conclusion as the above case $b=2$: $(s,t) = (p,q)$.

From now on we set $l_2=0$, in other words the vector $s-t$ is horizontal. 
Let us assume that $l_1\neq 2$, that is $(s,t)$ are mapped to the projective vector 
$$(l_1-2:-m_2l_1:m_1:4m_2-m_2l_1:2l_1 + |m|^2-l_1^2)=(1:a:b:c:d)\,.$$

In order for some $(a,b,c,d)$ to have more than two pre-images, the following system of equations is vacuous:
$$\begin{array}{rcl}l_1&=& z+2,\\ m_2(z+2) &=& -az,\\ m_1&=&bz,\\ 4m_2-m_2(z+2) &= & cz,\\ 2(z+2) + (bz)^2 - (z+2)^2 &= &dz\,.\end{array}$$
Thus $a=c=m_2=0$, namely the points $s,t$ lie on the horizontal coordinate axis. The last equation then implies that $b=\pm 1$, that is $m_1= \pm (l_1-2)$. Hence $s_1+t_1 = s_1-t_1 - 2$ which means $t=(-1,0)$ or $s_1+t_1 = -s_1+t_1 + 2$ or $s=(1,0)$. In both cases, since knowing, say $s$ defines the neighbour $t$, we arrive in the trivial solution $(s,t)=(p,q)$.

If we assume $l_2=2$ and $m\wedge l \neq 0$, that is $m_1\neq 0$, that $(s,t)$ will be mapped to the projective vector 
$(-2m_2: m_1: |m|^2).$ This vector clearly cannot infinitely many pre-images $(m_1,m_2)$. This is also the case when $m_1=0$.

This completes the proof of the lemma for the case of full curves $\gamma_{pqst}.$ To embrace the circular components in the reducible Scenario R2, we recall Lemma \ref{l:reducible} and its proof. Suppose, the centre of the circle is located on the line $pt$, with $q,s$ being symmetric relative to this line. Then given $t$ we know $q$, hence $s$. Moreover, changing $p$ would change the locus of the centre of the circle. Hence, the multiplicity of the circle is at most $2N$, as $(p,t)$ and $(s,q)$ can be swapped in the Scenario R2.

\end{proof}

Recall Lemma \ref{l:incidence}.
\incidenceBound*

\begin{proof}[Proof of Lemma \ref{l:incidence}]
    The proof uses Cauchy-Schwarz and the assumption that two distinct curves can intersect at most $C$ times. Rearranging and squaring the above definition of $|I(P,\Gamma)|$ gives
    \[|I(P,\Gamma)|^2 = \left(\sum_{p\in P} 1 \cdot \left(\sum_{\gamma\in \Gamma} m_\gamma \delta_{p\in \gamma}\right)\right)^2.\]
    Thus, by Cauchy-Schwarz we have that
    \[|I(P,\Gamma)|^2 \leq |P|\left(\left(\sum_{p\in P}\sum_{\gamma\in \Gamma}\sum_{\gamma'\in \Gamma\setminus\{\gamma\}}m_\gamma m_{\gamma'}\delta_{p\in \gamma}\delta_{p\in \gamma'}\right)+\left(\sum_{p\in P} \sum_{\gamma=\gamma'} m_\gamma^2\delta_{p\in \gamma}\right)\right).\]
    We will estimate each of these large summands separately. For the case when $\gamma\neq\gamma'$ we rearrange the order of summation to sum over the points in $P$ first.
    \begin{equation}\label{e:CauchySchwarz}
        \sum_{p\in P}\sum_{\gamma\in \Gamma}\sum_{\gamma'\in \Gamma\setminus\{\gamma\}}m_\gamma m_{\gamma'}\delta_{p\in \gamma}\delta_{p\in \gamma'} = \sum_{\gamma\in \Gamma}\sum_{\gamma'\in \Gamma\setminus\{\gamma\}}\sum_{p\in P}m_\gamma m_{\gamma'}\delta_{p\in \gamma}\delta_{p\in \gamma'}.
    \end{equation}
    Once we have fixed $\gamma$ and $\gamma'$ our assumption tells us that they can only cross in $C$ distinct points. Thus we have the bound
    \[\sum_{p\in P}m_\gamma m_{\gamma'}\delta_{p\in \gamma}\delta_{p\in \gamma'} \leq Cm_\gamma m_{\gamma'}.\]
    Putting the above into \eqref{e:CauchySchwarz} and separating out the sums gives
    \[\sum_{p\in P}\sum_{\gamma\in \Gamma}\sum_{\gamma'\in \Gamma\setminus\{\gamma\}}m_\gamma m_{\gamma'}\delta_{p\in \gamma}\delta_{p\in \gamma'}
    \leq 
    C\left(\sum_{\gamma}m_{\gamma}\right)\left(\sum_{\gamma'}m_{\gamma'}\right).
    \]
    Using the definition of $M$, we have that
    \begin{equation}\label{e:OffDiagonal}
    \sum_{p\in P}\sum_{\gamma\in \Gamma}\sum_{\gamma'\in \Gamma\setminus\{\gamma\}}m_\gamma m_{\gamma'}\delta_{p\in \gamma}\delta_{p\in \gamma'}
    \leq 
    CM^2.
    \end{equation}
    For the case when $\gamma=\gamma'$ we bound one of the $m_\gamma$ by $m_{\max}$ and note that we have recovered the definition of $|I(P,\Gamma)|$. Indeed,
    \begin{equation}\label{e:Diagonal}
        \sum_{p\in P} \sum_{\gamma\in\Gamma} m_\gamma^2\delta_{p\in \gamma} \leq m_{\max} \left(\sum_{p\in P} \sum_{\gamma\in\Gamma} m_\gamma\delta_{p\in \gamma}\right) = m_{\max}|I(P,\Gamma)|.
    \end{equation}
    Combining \eqref{e:OffDiagonal} and \eqref{e:Diagonal} gives us that
    \begin{equation}\label{e:quadBound}
        |I(P,\Gamma)|^2 \leq |P|\left( CM^2 + m_{\max}|I(P,\Gamma)|\right).
    \end{equation}
    One can use the quadratic formula at this stage to obtain an precise inequality at this stage, we avoid this as we only need an asymptotic bound.
    Suppose that \eqref{e:quadBound} is dominated by the first term, then
    \[|I(P,\Gamma)| \leq C^{1/2}M|P|^{1/2}.\]
    If \eqref{e:quadBound} is dominated by the second term, then
    \[|I(P,\Gamma)| \leq m_{\max}|P|.\]
    Combining the two bounds give the result.
\end{proof}

Recall Lemma \ref{lem:denseOnCircles}.
\Pups*

\begin{proof}[Proof of Lemma \ref{lem:denseOnCircles}]

We order the circles $C\in\mathcal{C}$ by non-increasing cardinality of their intersection with $P_1$, i.e. $\mathcal C=\{C_1,C_2,\ldots\},$ with  $|C_i\cap P_i| \geq |C_j\cap P_1|$ for $j>i$,  breaking ties ad hoc.

Let $k$ be minimal so that 
\[\Bigg| \bigcup_{i=1}^k (P_1 \cap C_i)\Bigg| \geq \frac12|P_1|.\]

If $k=1$, we are done, for either $50\%$ of $P_2$ is supported on $C_1$ (the second outcome of the lemma) or on $\mathcal C\setminus\{C_1\}$ (the first outcome, with $\mathcal C_1=\{C_1\}$, $\mathcal C_2=\mathcal C\setminus\mathcal C_1$).

Henceforth assume $k>1$.

If  \[\Bigg| \bigcup_{i>k} (P_2 \cap C_i)\Bigg| \geq \frac14|P_2|.\] we have the first outcome of the  lemma, with $\mathcal{C}_1 = \{C_1, \ldots, C_k\}$ and $\mathcal{C}_2 = \mathcal C\setminus\mathcal C_1.$

Otherwise, we reduce $k$ by one. Namely, by definition of $k$,
\[\Bigg| \bigcup_{i\geq k} (P_1 \cap C_i)\Bigg| \geq \frac12|P_1|.\]
Then if 
\[\Bigg| \bigcup_{i=1}^{k-1} (P_2 \cap C_i)\Bigg| \geq \frac12|P_2|\]
we are also done with the first outcome of the lemma, with $\mathcal C_1= \{C_i\}_{i\geq k}$ and $\mathcal C_2= \{C_i\}_{i< k}$

We are left with the case when more than $75\%$ of $P_2$ is supported on $\bigcup_{i=1}^k C_k$, with less than $50\%$ on $\bigcup_{i=1}^{k-1} C_k$, hence $|P_2\cap C_k|\geq \frac14|P_2|.$
However, from the ordering in question, 
\[\Bigg| \bigcup_{i=1}^{k-1} (P_1 \cap C_i)\Bigg| \geq \frac14|P_1|,\]
and we are once again done with the first outcome, with $\mathcal C_1= \{C_i\}_{i< k}$ and $\mathcal C_2=\{C_i\}_{i\geq k}$.

 \end{proof}

\bibliography{angles}{}

\begin{thebibliography}{10}

\bibitem{aronov1991crossing}
Boris Aronov, Paul Erd{\H{o}}s, Wayne Goddard, Daniel~J Kleitman, Michael
  Klugerman, J{\'a}nos Pach, and Leonard~J Schulman.
\newblock Crossing families.
\newblock In {\em Proceedings of the seventh annual symposium on Computational
  geometry}, pages 351--356, 1991.

\bibitem{bloom_cont}
Thomas~F. Bloom.
\newblock Control and its applications in additive combinatorics, 2025.

\bibitem{bradshaw2022higher}
Peter Bradshaw, Brandon Hanson, and Misha Rudnev.
\newblock Higher convexity and iterated second moment estimates.
\newblock {\em The Electronic Journal of Combinatorics}, pages P3--6, 2022.

\bibitem{bradshaw2023growth}
Peter~J Bradshaw.
\newblock Growth in sumsets of higher convex functions.
\newblock {\em Combinatorica}, pages 1--21, 2023.

\bibitem{bukh2012sum}
Boris Bukh and Jacob Tsimerman.
\newblock Sum--product estimates for rational functions.
\newblock {\em Proceedings of the London Mathematical Society}, 104(1):1--26,
  2012.

\bibitem{de2018survey}
Frank de~Zeeuw.
\newblock A survey of {E}lekes-{R}{\'o}nyai-type problems.
\newblock In {\em New trends in intuitive geometry}, pages 95--124. Springer,
  2018.

\bibitem{elekes1999linear}
Gyorgy Elekes.
\newblock On linear combinatorics {III}. {F}ew directions and distorted
  lattices.
\newblock {\em Combinatorica}, 19(1):43--53, 1999.

\bibitem{ENR}
Gy\"orgy Elekes, Melvyn~B. Nathanson, and Imre~Z. Ruzsa.
\newblock Convexity and sumsets.
\newblock {\em J. Number Theory}, 83(2):194--201, 2000.

\bibitem{elekes2000combinatorial}
Gy{\"o}rgy Elekes and Lajos R{\'o}nyai.
\newblock A combinatorial problem on polynomials and rational functions.
\newblock {\em Journal of Combinatorial Theory, Series A}, 89(1):1--20, 2000.

\bibitem{fleischmann2023distinct}
Henry~L Fleischmann, Sergei~V Konyagin, Steven~J Miller, Eyvindur~A Palsson,
  Ethan Pesikoff, and Charles Wolf.
\newblock Distinct angles in general position.
\newblock {\em Discrete Mathematics}, 346(4):113283, 2023.

\bibitem{guth2016polynomial}
Larry Guth.
\newblock {\em Polynomial methods in combinatorics}, volume~64.
\newblock American Mathematical Soc., 2016.

\bibitem{gyarmati2008plunnecke}
Katalin Gyarmati, M{\'a}t{\'e} Matolcsi, and Imre Ruzsa.
\newblock Pl{\"u}nnecke’s inequality for different summands.
\newblock {\em Building Bridges: Between Mathematics and Computer Science},
  pages 309--320, 2008.

\bibitem{hanson2022higher}
Brandon Hanson, Oliver Roche-Newton, and Misha Rudnev.
\newblock Higher convexity and iterated sum sets.
\newblock {\em Combinatorica}, 42(1):71--85, 2022.

\bibitem{hanson2023convexity}
Brandon Hanson, Oliver Roche-Newton, and Steven Senger.
\newblock Convexity, superquadratic growth, and dot products.
\newblock {\em Journal of the London Mathematical Society}, 2023.

\bibitem{katz2008slight}
Nets~Hawk Katz and Chun-Yen Shen.
\newblock A slight improvement to {G}araev's sum product estimate.
\newblock {\em Proceedings of the American Mathematical Society}, pages
  2499--2504, 2008.

\bibitem{lund2020bisectors}
Ben Lund and Giorgis Petridis.
\newblock Bisectors and pinned distances.
\newblock {\em Discrete \& Computational Geometry}, 64:995--1012, 2020.

\bibitem{lund2016bisector}
Ben Lund, Adam Sheffer, and Frank De~Zeeuw.
\newblock Bisector energy and few distinct distances.
\newblock {\em Discrete \& Computational Geometry}, 56:337--356, 2016.

\bibitem{sam_conv}
Samuel Mansfield.
\newblock Additive growth amongst images of linearly independent analytic
  functions, 2025.

\bibitem{mirzaei2020positive}
Mozhgan Mirzaei and Andrew Suk.
\newblock A positive fraction mutually avoiding sets theorem.
\newblock {\em Discrete Mathematics}, 343(3):111730, 2020.

\bibitem{murphy2022pinned}
Brendan Murphy, Giorgis Petridis, Thang Pham, Misha Rudnev, and Sophie Stevens.
\newblock On the pinned distances problem in positive characteristic.
\newblock {\em Journal of the London Mathematical Society}, 105(1):469--499,
  2022.

\bibitem{pach2019planar}
J{\'a}nos Pach, Natan Rubin, and G{\'a}bor Tardos.
\newblock Planar point sets determine many pairwise crossing segments.
\newblock In {\em Proceedings of the 51st Annual ACM SIGACT Symposium on Theory
  of Computing}, pages 1158--1166, 2019.

\bibitem{roche2023better}
Oliver Roche-Newton.
\newblock A better than 3/2 exponent for iterated sums and products over {$\Bbb
  R$}.
\newblock {\em Math. Proc. Cambridge Philos. Soc.}, 177(1):11--22, 2024.

\bibitem{olly_cp}
Oliver Roche-Newton.
\newblock A lower bound for the number of pinned angles determined by a
  {C}artesian product set.
\newblock {\em Combinatorica}, 45(2):Paper No. 13, 13, 2025.

\bibitem{ruzsa2021distinct}
Imre Ruzsa, George Shakan, J{\'o}zsef Solymosi, and Endre Szemer{\'e}di.
\newblock On distinct consecutive differences.
\newblock In {\em Combinatorial and Additive Number Theory IV: CANT, New York,
  USA, 2019 and 2020 4}, pages 425--434. Springer, 2021.

\bibitem{ruzsa2022sumsets}
Imre Ruzsa and J{\'o}zsef Solymosi.
\newblock Sumsets of semiconvex sets.
\newblock {\em Canadian Mathematical Bulletin}, 65(1):84--94, 2022.

\bibitem{solymosi2023structure}
J{\'o}zsef Solymosi.
\newblock On the structure of pointsets with many collinear triples.
\newblock {\em Discrete \& Computational Geometry}, pages 1--24, 2023.

\bibitem{solymosi2024improved}
J{\'o}zsef Solymosi and Joshua Zahl.
\newblock Improved {E}lekes-{S}zab{\'o} type estimates using proximity.
\newblock {\em Journal of Combinatorial Theory, Series A}, 201:105813, 2024.

\bibitem{stevens2022sum}
Sophie Stevens and Audie Warren.
\newblock On sum sets and convex functions.
\newblock {\em The Electronic Journal of Combinatorics}, pages P2--18, 2022.

\bibitem{suk2017erdHos}
Andrew Suk.
\newblock On the {E}rd{\H{o}}s-{S}zekeres convex polygon problem.
\newblock {\em Journal of the American Mathematical Society}, 30(4):1047--1053,
  2017.

\bibitem{tao2006additive}
Terence Tao and Van~H Vu.
\newblock {\em Additive combinatorics}, volume 105.
\newblock Cambridge University Press, 2006.

\bibitem{valtr1997mutually}
Pavel Valtr.
\newblock On mutually avoiding sets.
\newblock In {\em The Mathematics of Paul Erd{\"o}s {II}}, pages 324--328.
  Springer, 1997.

\end{thebibliography}

\bibliographystyle{plain}
 
\end{document}